\newtheorem{theorem}{Theorem}[section]
\newtheorem{lemma}[theorem]{Lemma}
\newtheorem{trditev}[theorem]{Proposition}
\theoremstyle{definition}
\theoremstyle{remark}
\newtheorem{remark}[theorem]{Remark}
\numberwithin{equation}{section}
\def\max{\mathop{\rm max}\nolimits}
\def\Span{\mathop{\rm Span}\nolimits}
\def\Trace{\mathop{\rm Tr}\nolimits}
\def\diag{\mathop{\rm diag}\nolimits}
\def\Rea{\mathop{\rm Re}\nolimits}
\def\Minor{\mathop{\rm Minor}\nolimits}
\def\Ima{\mathop{\rm Im}\nolimits}
\begin{document}

\noindent
This is a pre-print of an article published in Periodica Mathematica Hungarica. The final authenticated version is available online at:\\ https://doi.org/10.1007/s10998-018-0238-z. 

\vspace{1cm}

\title[]
{On regular Stein neighborhoods of a union of two maximal totally real subspaces in $\mathbb{C}^n$}
\author{Tadej Star\v{c}i\v{c}}
\address{Faculty of Education, University of Ljubljana, Kardeljeva Plo\v{s}\v{c}ad 16, 1000 Lju\-blja\-na, Slovenia and Institute of Mathematics, Physics and Mechanics, Jadranska 19, 1000
Ljubljana, Slovenia}
\email{tadej.starcic@pef.uni-lj.si}
\subjclass[2000]{32E10, 32Q28, 32T15, 54C15}
\date{September 16, 2016}


\keywords{Stein neighborhoods, totally real subspaces, strongly pseudoconvex domains, strong deformation retraction\\
\indent Research supported by grants P1-0291 and J1-5432 from ARRS, Republic of Slovenia.}

\begin{abstract}
We present a construction of regular Stein neighborhoods of a union of maximally totally real subspaces 
$M=(A+iI)\mathbb{R}^n$ and $N=\mathbb{R}^n$ in $\mathbb{C}^n$, provided that the entries of a real $n \times n$ 
matrix $A$ are sufficiently small. Our proof is based on a local construction of a suitable plurisubharmonic function 
$\rho$ near the origin, such that the sublevel sets of $\rho$ are strongly pseudoconvex and admit 
strong deformation retraction to $M\cup N$. We also give the application of this result to totally real immersions of real $n$-manifolds in 
$\mathbb{C}^n$ with only finitely many double points, and such that the union of the tangent spaces at each intersection in some local coordinates coincides with 
$M\cup N$, described above.
\end{abstract}


\maketitle

\section{Introduction}

Many classical problems in complex analysis are solvable on Stein manifolds (see for instance \cite{lit21}). It is therefore a very useful property for a subset of a manifold to have open Stein neighborhoods. However, to solve certain problems some further suitable properties of such neighborhoods are needed. For sets that either have tubular neighborhoods or allow uniformly $H$-convex neighborhoods one obtains holomorphic approximation theorems 
(see e.g. Nirenberg and Wells \cite{lit27},
Chirka \cite{Cir69}).


It is also important to control the homotopy type or the shape of the neighborhoods, and hence having 
the so-called {\em regular} neighborhoods; these are neighborhoods which admit a strong deformation retraction 
to a given set. By the results of Forstneri\v{c} \cite[Theorem 2.2]{lit1n} (see also \cite{F92}) and 
Slapar \cite{lit2n} every closed real surface which is smoothly immersed into a complex surface has 
a basis of regular Stein neighborhoods, provided that there are only finitely many double points and 
only hyperbolic complex points, and they are all of special type. Near a special double point the surface 
is given as a model case of a union of two totally real planes in $\mathbb{C}^2$, intersecting only at the origin. 
Proposition \ref{trdi} and \cite[Proposition 4.3]{Tadej2} by the author further extend the above result, but 
still only for some special cases.

In this paper we consider the generalization to higher dimensions, to the case of a union of two totally real 
subspaces of maximal dimension in $\mathbb{C}^n$, intersecting only at the origin. Every such union is 
complex-linearly equivalent to $M(A)\cup N= (A+iI)\mathbb{R}^n\cup \mathbb{R}^n$, where $A$ is a real 
matrix determined up to real conjugacy and such that $i$ is not an eigenvalue of $A$. The problem is to find 
a suitable plurisubharmonic function $\rho$ near the intersection. By a result of Weinstock \cite{Wein} each 
compact subset of $M(A)\cup N$ is polynomially convex if and only if $A$ has no purely imaginary eigenvalue of 
modulus greater than one. In this case one can easily obtain a non-negative plurisubharmonic function, vanishing on 
$M(A)\cup N$.
However, to construct regular neighborhoods additional hypoteses on the gradient $\nabla \rho$ are needed. 
For this reason, as in \cite{Tadej2} we prefer to work with functions, depending polynomially in squared 
Euclidean distance functions to $M$ and $N$ respectively. Furthermore, we are now able to give more streamlined 
computations concerning the Levi form of $\rho$. This enables us to prove the existence of regular Stein 
neighborhoods of $M(A)\cup N$, provided that the eigenvalues of $A$ are sufficiently close to zero; 
see Theorem \ref{izrek} for an estimate of how small these eigenvalues can be. At the end we also give the application of this result to totally real immersions of real $n$-manifolds in 
$\mathbb{C}^n$ with only finitely many double points, and such that the union of the tangent spaces at each intersection in some local coordinates coincides with 
$M(A)\cup N$, described above. 
In connection to this we also note that Weinstock's result has been recently generalised by Gorai \cite{Gorai} and Shafikov and Sukhov \cite[Theorems 1.3 and 4.2]{sukhov}, to the effect that a union of two maximally totally real submanifolds in $\mathbb{C}^n$, intersecting transversally at the origin, is polynomially convex near the origin, if the union of their tangent spaces at the origin is polinomially convex near the origin. 

%
%
%
%
\section{The Euclidean distance function to a totally real subspace}\label{DTR}

Throughout this paper $z=(z_1,\ldots,z_n)$ will be standard holomorphic coordinates and $(x,y)=(x_1,\ldots,x_n,y_1,\ldots,y_n)$ corresponding real coordinates on $\mathbb{C}^n= (\mathbb{R}+ i\mathbb{R})^n\approx \mathbb{R}^{2n}$ with respect to $z_j=x_j+iy_j$ for all $j\in\{1,\ldots,n\}$.

By $\langle\cdot,\cdot\rangle$ and $|\cdot|$ respectively we denote the Euclidean inner product and Euclidean 
distance on any $\mathbb{C}^r$, $r\in \mathbb{N}$:
\begin{align*}
&\langle\xi,\eta\rangle=\sum_{j=1}^r\xi_j\overline{\eta}_j,\qquad \xi=(\xi _1,\ldots,\xi _r),\quad \eta=(\eta _1,\ldots,\eta _r),\\
&|\xi|=\sqrt{\langle\xi,\xi\rangle}
\end{align*}
In real notation with $\xi_j=s_j+it_j$ and $\eta_j=u_j+v_j$ for $j\in\{1,\ldots,r\}$ we have
\[
\langle\xi,\eta\rangle=\sum_{j=1}^r( s_j u_j+ t_j v_j),\quad \xi=(s_1,\ldots,s_r,t_1,\ldots,t_r),\,\eta=(u_1,\ldots,u_r,v_1,\ldots,v_r).
\]

Next, recall that a real linear subspace in $\mathbb{C}^n$ is called {\em totally real} if it contains no complex subspace. 
Let now $M$ and $N$ be linear totally real subspaces of maximal dimension $n$ in $\mathbb{C}^n$, intersecting at the origin. It is not difficult to prove (see e.g. \cite{Wein}) that there exists a non-singular complex linear transformation which maps $N$ onto $\mathbb{R}^n\approx (\mathbb{R}+ i0)^n\subset \mathbb{C}^n$ and $M$ onto $M(A)=(A + i I)\mathbb{R}^n$, where $A$ is the real Jordan canonical form, i.e. $A$ is a square block matrix, having zero-matrices as off-diagonal blocks, and each of the main diagonal blocks satisfies one of the two conditions listed below: 
\begin{enumerate}
\item 
\label{case11}A matrix with $a\in \mathbb{R}$ on the main diagonal, possibly with $\delta\in \mathbb{R}\{0\}$ on the upper diagonal, and zeros otherwise, i.e. 
            \begin{equation}\label{realE}
            \left [\begin{array}{c c c c}
                                   a         & \delta            & \;     & \;  \\
				   \;        & a    & \ddots & \;  \\
				   \;        & \;           & \ddots & \delta   \\
                                   \;        & \;           & \;     & a 
                      \end{array}\right], 
            \end{equation}          
	    %
	    %
\item 
\label{case22}A square block matrix, having $2\times 2$ main diagonal blocks with complex eigenvalues, possibly with the $2\times 2$ identity-matrix $I_2$ multiplied by $\delta\in \mathbb{R}\{0\}$ on the upper diagonal, and $2\times 2$ zero-matrices othervise, i.e. 
                            \begin{equation}\label{complexE}
                            \left [\begin{array}{c c c c}
                                                      C    & \delta I_2       & \;     & \;    \\
						      \;     & C     & \ddots & \;    \\     
						      \;     & \;      & \ddots & \delta I_2     \\
                                                      \;     & \;      & \;     & C   
                                   \end{array}\right],\,\,
                                   C=\left[\begin{array}{c c}   
	                                            c & -b\\
	                                            b & c
                                       \end{array}\right],  b,c\in\mathbb{R}, \, c\neq 0,\, c^2+(1-b^2)^2\neq 0,
                            \end{equation}           
                            %
\end{enumerate}
Moreover, $A$ is uniquely determined up to the order of diagonal blocks, and a non-zero real number $\delta$ can be chosen arbitrarily. Also, the degerate case (i.e. $1\times 1$ matrix in case (\ref{realE}) and $2\times 2$ matrix in case (\ref{complexE})) is considered here, though it lacks an upper diagonal or block-upper diagonal, respectively. Clearly, $A$ is diagonalizable if and if all diagonal blocks are degenerate.

For any $\delta\in \mathbb{R}$ we set  
\begin{equation}\label{Adelta}
A_{\delta}=\diag(A_{\delta,1}, \ldots, A_{\delta,\alpha})
\end{equation} 
to be a $n\times n$ block diagonal matrix and such that for every $j\in\{1,\ldots,\alpha\}$ the diagonal block  $A_{\delta,\alpha}\in \mathbb{R}^{n_j\times n_j}$ is of the form (\ref{realE}) or (\ref{complexE}), possibly degenerate, and $n_1+\ldots+n_{\alpha}=n$.

\begin{lemma}\label{lemadelta}
Let $A_{\delta}$ for $\delta\in \mathbb{R}$ be defined as in (\ref{Adelta}), and let $d_{M(A_{\delta})}(x,y)$ be the squared Euclidean distance from $(x,y)=(x_1,\ldots,x_n,y_1,\ldots,y_n)$ to $M(A_{\delta})$ in $\mathbb{R}^{2n}$. If $A_{\delta}$ is non-diagonalizable for $\delta\neq 0$, then 
\[
d_{M(A_{\delta})}(x,y)=d_{M(A_0)}(x,y)+q_{\delta}(x,y),
\]
where $q_{\delta}$ is a homogeneous polynomial of degree $2$ in $x,y$ and such that its coefficients are rational functions in $\delta$ and they have a zero at $\delta=0$. Furthermore, if $A_0=\diag (C_{1},C_{2}\ldots,C_{\beta},a_1,\ldots a_{\gamma})$, $2\beta+\gamma=n$, where 
                                                $C_j=\left[\begin{array}{c c}   
	                                            c_j & -b_j\\
	                                            b_j & c_j
                                       \end{array}\right]$, and $c_{1},b_{1}\ldots,c_{\beta},b_{\beta},a_1,\ldots a_{\gamma}\in \mathbb{R}$, then
\begin{align}\label{dM0}
d_{M(A_0)}(x,y)=&\sum_{j=1}^{\beta}\frac{(x_{2j-1}-c_jy_{2j-1}+b_jy_{2j})^2+(x_{2j}-c_jy_{2j}-b_jy_{2j-1})^2}{1+c_j^2+b_j^2}\\
&+\sum_{j=2\beta+1}^{2\beta+\gamma}\frac{(x_j-a_jy_j)^2}{1+a_j^2}.\nonumber
\end{align}
In particular, if $A_0=\diag (C_{1},C_{2}\ldots,C_{n/2})$ (respectively $A_0=\diag (a_1,\ldots a_{n})$), then $d_{M(A_0)}(x,y)$ is equal to the first (respectively second) term of the above sum (\ref{dM0}) for $\beta=n/2$ (respectively $\beta=0$, $\gamma=n$).
\end{lemma}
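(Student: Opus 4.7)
The plan is to derive a single closed-form expression for $d_{M(A_{\delta})}(x,y)$ as a quadratic form in $(x,y)$ whose coefficient matrix depends rationally on $\delta$, and then read off both the $\delta=0$ formula and the structure of $q_{\delta}$ from it.

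First I would compute the orthogonal complement of $M(A_{\delta})=\{(A_{\delta}t,t):t\in\mathbb{R}^{n}\}$ in $\mathbb{R}^{2n}$. A vector $(u,v)$ is perpendicular to every element of $M(A_{\delta})$ iff $(A_{\delta}t)^{T}u+t^{T}v=0$ for all $t$, i.e.\ $v=-A_{\delta}^{T}u$, so $M(A_{\delta})^{\perp}=\{(u,-A_{\delta}^{T}u):u\in\mathbb{R}^{n}\}$. Decomposing $(x,y)=(A_{\delta}t,t)+(u,-A_{\delta}^{T}u)$ and eliminating $t$ leads to $(I+A_{\delta}A_{\delta}^{T})u=x-A_{\delta}y$, which is uniquely solvable since $I+A_{\delta}A_{\delta}^{T}$ is positive definite. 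The squared distance then equals
\[
d_{M(A_{\delta})}(x,y)=|u|^{2}+|A_{\delta}^{T}u|^{2}=\bigl\langle x-A_{\delta}y,\,(I+A_{\delta}A_{\delta}^{T})^{-1}(x-A_{\delta}y)\bigr\rangle.
\]

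Next I would specialize at $\delta=0$: since $A_{0}=\diag(C_{1},\ldots,C_{\beta},a_{1},\ldots,a_{\gamma})$ is block-diagonal, so is $I+A_{0}A_{0}^{T}$, and the formula decouples into independent block contributions. Each $1\times 1$ block $a_{j}$ produces $(x_{j}-a_{j}y_{j})^{2}/(1+a_{j}^{2})$; for the $2\times 2$ block $C_{j}$ one checks $C_{j}C_{j}^{T}=(c_{j}^{2}+b_{j}^{2})I_{2}$, so $(I+C_{j}C_{j}^{T})^{-1}$ is a scalar multiple of $I_{2}$ and direct expansion of $|x-C_{j}y|^{2}$ reproduces the two bracketed squares in (\ref{dM0}). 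Summing over blocks gives (\ref{dM0}).

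For the perturbative part I would write $A_{\delta}=A_{0}+\delta E$, where $E$ encodes the fixed upper-diagonal pattern of $1$'s or $I_{2}$ blocks. Then the entries of $I+A_{\delta}A_{\delta}^{T}$ are polynomials in $\delta$ of degree at most two, and Cramer's rule expresses $(I+A_{\delta}A_{\delta}^{T})^{-1}$ as a matrix of rational functions in $\delta$ with common denominator $\det(I+A_{\delta}A_{\delta}^{T})$, which is strictly positive for every real $\delta$ by positive definiteness. Combined with the affine-in-$\delta$ factor $x-A_{\delta}y=(x-A_{0}y)-\delta Ey$, this forces $d_{M(A_{\delta})}(x,y)$ to be a homogeneous quadratic in $(x,y)$ whose coefficients are rational functions of $\delta$. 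Setting $q_{\delta}:=d_{M(A_{\delta})}-d_{M(A_{0})}$ then delivers the claim, and vanishing of the coefficients of $q_{\delta}$ at $\delta=0$ is tautological from the closed form, which specializes to $d_{M(A_{0})}$ at $\delta=0$.

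The main obstacle I anticipate is purely notational—keeping track of the $2j-1,2j$ indexing in the $2\times 2$ block expansion so that the output matches (\ref{dM0}) on the nose. The conceptual content reduces to the projection-formula identity for $d_{M(A_{\delta})}$, after which everything is algebra on rational functions of $\delta$.
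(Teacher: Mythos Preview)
Your argument is correct and considerably more streamlined than the paper's. The paper proceeds block by block: it first reduces to a single Jordan block via the orthogonal splitting $M(A_\delta)^\perp=\bigoplus_j M_{\delta,j}^\perp$, then for each block type writes down explicit spanning vectors of $M(A_\delta)^\perp$, runs Gram--Schmidt on them, and verifies by induction that each orthogonalized vector has the form $(\text{vector independent of }\delta)+\delta\cdot(\text{vector with components rational in }\delta,\text{ regular at }0)$. The squared distance is then expanded as a sum of squared projections onto these vectors, and the $\delta$-free part is identified term by term with $d_{M(A_0)}$, separately for real and complex blocks.

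Your closed-form identity $d_{M(A_\delta)}(x,y)=\langle x-A_\delta y,\,(I+A_\delta A_\delta^{T})^{-1}(x-A_\delta y)\rangle$ bypasses all of this: block-diagonality of $A_0$ together with the observation $C_jC_j^{T}=(b_j^{2}+c_j^{2})I_2$ gives formula~(\ref{dM0}) immediately, and the Cramer's-rule argument handles the $\delta$-dependence uniformly for both block types at once. The paper's route does produce an explicit orthogonal basis of $M(A_\delta)^\perp$, but this basis is not used elsewhere in the paper, so nothing is lost by your shortcut. The only point worth spelling out, as you note, is the $2\times 2$ block bookkeeping: write out $x'-C_jy'=\bigl(x_{2j-1}-c_jy_{2j-1}+b_jy_{2j},\,x_{2j}-c_jy_{2j}-b_jy_{2j-1}\bigr)$ and square.
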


\begin{proof}
Let $M(A_{\delta,j})^{\bot}$ be the orthogonal complement of $M(A_{\delta,j})$ in $\mathbb{C}^{n_j}$ with respect to the standard inner product. It gives decompositions of $M(A_{\delta})$ and its orthogonal complement $M(A_{\delta})^{\bot}$ respectively into pairwise orthogonal linear subspaces:
\[
M(A_{\delta})=M_{\delta,1}\oplus \ldots\oplus M_{\delta,\alpha}, \quad
M(A_{\delta})^{\bot}=M_{\delta,1}^{\bot}\oplus \ldots\oplus M_{\delta,\alpha}^{\bot},
\]
where $M_{\delta,}=\{0\}^{n_1+\ldots+n_{j-1}}\times M(A_{\delta,j})\times \{0\}^{n_{j+1}+\ldots+n_{\alpha}}$ and $M_{\delta,j}^{\bot}=\{0\}^{n_1+\ldots+n_{j-1}}\times M(A_{\delta,j})^{\bot}\times \{0\}^{n_{j+1}+\ldots+n_{\alpha}}$ for all $j$. 

If for any linear subspace $M\subset \mathbb{C}^r$ we denote the squared Euclidean distance and squared Euclidean projection to $M$ in $\mathbb{C}^r$ respectively by $d_M$ and $p_M$, we obtain    
$d_{M(A_{\delta})}=p_{M(A_{\delta})^{\bot}}=\sum_{j=1}^{\alpha} p_{M_{\delta,j}^{\bot}}=\sum_{j=1}^{\alpha} d_{M_{\delta,j}}=\sum_{j=1}^{\alpha} d_{M(A_{\delta,j})}$. For this reason, it is sufficient to prove the lemma for the case when $A_{\delta}$ is of the form (\ref{realE}) or (\ref{complexE}).

First, we consider the case when $A_{\delta}\in \mathbb{R}^{n\times n}$ is of the form (\ref{realE}). This implies that $M(A_{\delta})$ is given as a span of $n$ linearly independent vectors
\begin{equation}\label{spanMa}
        M(A_{\delta})=\Span \{f_{j}+ae_{j}+\delta e_{j-1}\}_{2\leq j \leq n}\cup \{ae_1+f_2\},
\end{equation}
where $\{e_1,f_1,\ldots,e_{n},f_{2n}\}$ is the standard ortho-normal basis of $\mathbb{R}^{2n}$. 
We observe further that the orthogonal complement is then equal to 
\begin{equation}\label{spanMaC}
        M(A_{\delta})^{\bot}=\Span\{e_{j}-af_{j}-\delta f_{j+1}\}_{1\leq j \leq n-1}\cup  \{e_{n}-af_{n}\}
\end{equation}
Indeed, since every $e_j$ for $j\in \{1,\ldots,n\}$ is orthogonal to all but one vector in the span (\ref{spanMaC}), the span contains $n$ linearly independent vectors.

To simplify the computations we denote the vectors in (\ref{spanMaC}) by
\[
        g_j=e_{j}-af_{j}-\delta f_{j+1}, \,\, 1\leq j \leq n-1, \qquad g_n=e_{n}-af_{n}
\]
and we perform Gram-Schmidt process $g_{m}^{'}=g_m-\sum_{j<m}\frac{\langle g_m,g_j^{'}\rangle}{|g_j^{'}|^2}g_j^{'}$ to obtain orthogonal basis $\{g_1^{'},\ldots,g_n^{'}\}$ of $M(A_{\delta})^{\bot}$. We show by induction that  
\begin{equation}\label{rac}
        g_j^{'}=e_{j}-af_{j}-\delta v_{j}, \qquad j\in \{1,\ldots,n\},
\end{equation}
where the components of $v_j$ are rational functions in $\delta$ and without a pole at $\delta=0$. Suppose (\ref{rac}) holds for all $1\leq j<m$. For any $j<m$ it then follows that $|g_j^{'}|^2$ is a rational function in $\delta$ and it has no zero at $\delta=0$, and $\langle g_m,g_j^{'}\rangle$ is a rational function in $\delta$ and without a pole at $\delta=0$. This immediately implies (\ref{rac}).

The squared Euclidean distance of $(x,y)$ to $M(A_{\delta})$ is thus
\begin{align*}
d_{M(A_{\delta})}(x,y)=&\sum_{j=1}^{n}\frac{\big\langle g^{'}_j,(x,y)\big\rangle^2}{|g_j^{'}|^2}=\sum_{j=1}^{n}\frac{\Big(\big\langle e_{j}-af_{j},(x,y)\big\rangle +\delta\big\langle v_j,(x,y)\big\rangle\Big)^2}{|e_{j}-af_{j}-\delta v_{j}|^2}\\
=&\sum_{j=1}^{n}\frac{\big\langle e_{j}-af_{j},(x,y)\big\rangle^2}{|e_{j}-af_{j}|^2}+\delta\sum_{j=1}^{n}\frac{\big\langle v_j,(x,y)\big\rangle\big\langle 2g_j^{'}+3\delta v_j,(x,y)\big\rangle}{|g_j^{'}|^2}\\
&-\delta
\sum_{j=1}^{n}\frac{\big\langle e_{j}-af_{j},(x,y)\big\rangle^2\big\langle 2g_j^{'}+3\delta v_j,v_j\big\rangle}{|e_{j}-af_{j}|^2|g_j^{'}|^2}.
\end{align*}
Observe that the sums in the last are homogeneous polynomials of degree $2$ in $x,y$ and such that its coefficients are rational functions in $\delta$ and in addition they have no a pole at $\delta=0$. Further, for $\delta=0$ in (\ref{spanMaC}) we see that $M(A_{0})^{\bot}=\Span\{e_{j}-af_{j}\}_{1\leq j \leq n}$ is a span of orthogonal vectors, hence the first term in the obove sum is equal to 
\[
d_{M(A_{0})}(x,y)=\sum_{j=1}^n\frac{(x_j-ay_j)^2}{1+a^2}.
\]
This completes proof in the case when $A_{\delta}$ is of the form (\ref{realE}).

In a similar fashin we shall now deal with the case when $A_{\delta}$ is of the form (\ref{complexE}) and thus $n$ even. We have 
\begin{align*}
        M(A_{\delta})=&\Span \{ce_{j}+f_{j}+be_{j+1}+\delta e_{j-2},-be_{j}+f_{j+1}+c f_{j+1}+\delta e_{j-1}\}_{ j \in \{3,5,\ldots, n-1\}}\\
        &\cup \{ce_1+f_1+be_2,-be_1+f_2+ce_2\}
\end{align*}
and
\begin{align}\label{M2}
        M(A_{\delta})^{\bot}=&\Span \{e_{j}-cf_{j}+bf_{j+1}-\delta f_{j+2},e_{j+1}-bf_{j}-c f_{j+1}-\delta e_{j+3}\}_{ j \in \{1,3,\ldots, n-3\}}\nonumber\\
        &\cup \{-cf_{n-1}+e_{n-1}+bf_n,-bf_{n-1}+e_n-cf_n\}.
\end{align}
Again, every $e_j$ for $j\in \{1,\ldots,n\}$ is orthogonal to all but one vector in the span (\ref{M2}), hence the vectors in the span are linearly independent.

Next, we set
\begin{align*}
        &h_{j}=-cf_{j}+e_{j}+bf_{j+1}-\delta f_{j+2},\quad j \in \{1,3,\ldots, n-3\}\\
        & h_{j}=-bf_{j-1}+e_{j}-c f_{j}-\delta e_{j+3},\quad j \in \{2,4,\ldots, n-2\}\\
        &h_{n-1}=-cf_{n-1}+e_{n-1}+bf_n, \quad h_{2n}=-bf_{n-1}+e_n-cf_n
\end{align*}
and proceed with the Gram-Schmidt process to obtain orthogonal basis $\{h_1^{'},\ldots,h_n^{'}\}$ of $M(A_{\delta})^{\bot}$. Similar to (\ref{rac}), we now have
\begin{align}\label{induC}
 h_{j}^{'}=&-cf_{j}+e_{j}+bf_{j+1}-\delta w_{j},\quad j \in \{1,3,\ldots, n-1\}\\
         h_{j}^{'}=&-bf_{j-1}+e_{j}-c f_{j}-\delta w_{j},\quad j \in \{2,4,\ldots, n\}\nonumber
\end{align}
where the components of $w_j$ are rational functions in $\delta$ and without a pole at $\delta=0$.

It follows that
\begin{align*}
d_{M(A_{\delta})}(x,y)=&\sum_{j=1,3,\ldots,n-1}\frac{\big\langle-cf_{j}+e_{j}+bf_{j+1},(x,y)\big\rangle^2}{|-cf_{j}+e_{j}+bf_{j+1}|^2}\\
&+\sum_{j=2,4,\ldots,n}\frac{\big\langle-bf_{j-1}+e_{j}-c f_{j},(x,y)\big\rangle^2}{|-bf_{j-1}+e_{j}-c f_{j}|^2}\\
&-\delta
\sum_{j=1,3,\ldots,n-1}\frac{\big\langle-cf_{j}+e_{j}+bf_{j+1},(x,y)\big\rangle^2\big\langle2h_j^{'}+3\delta w_j,w_j\big\rangle}{|-cf_{j}+e_{j}+bf_{j+1}|^2|h_j^{'}|^2}\\
&-\delta
\sum_{j=2,4,\ldots,n}\frac{\big\langle-bf_{j-1}+e_{j}-c f_{j},(x,y)\big\rangle^2\big\langle 2h_j^{'}+3\delta w_j,w_j\big\rangle}{|-bf_{j-1}+e_{j}-c f_{j}|^2|h_j^{'}|^2}\\
&+\delta\sum_{j=1}^{n}\frac{\big\langle w_j,(x,y)\big\rangle\big\langle 2h_j^{'}+3\delta w_j,(x,y)\big\rangle}{|h_j^{'}|^2}
\end{align*}
Again, we observe that the sums in the last three terms are polynomials of degree $2$ in $x,y$ and such that its coefficients are rational functions in $\delta$ and in addition they have no pole at $\delta=0$. Since (\ref{induC}) for $\delta=0$ is a span of orthogonal vectors, the first two terms in the obove sum are equal to 
\begin{align*}
d_{M(A_{0})}(x,y)=&\sum_{j=1,3,\ldots,n-1}\frac{(x_j-cy_j+by_{j+1})^2}{1+c^2+b^2}+\sum_{j=2,4,\ldots,n}\frac{(x_j-cy_j-by_{j-1})^2}{1+c^2+b^2}\\
=&\sum_{j=1}^n\left(\frac{(x_{2j-1}-cy_{2j-1}+by_{2j})^2}{1+c^2+b^2}+\frac{(x_{2j}-cy_{2j}-by_{2j-1})^2}{1+c^2+b^2}\right)
\end{align*}
This completes proof of the lemma.
\end{proof}

\section{Local construction at the intersection}

Given a $\mathcal{C}^2$-function $f\colon \Omega\to \mathbb{R}$ on an open set $\Omega\subset\mathbb{C}^n$ we denote 
holomorphic and antiholomorphic derivatives of $f$ by
$
\frac{\partial f}{\partial z_j}=\frac{1}{2}\Big(\frac{\partial f}{\partial x_j}-i\frac{\partial f}{\partial y_j}\Big), \, \frac{\partial f}{\partial \overline{z}_j}=\frac{1}{2}\Big(\frac{\partial u}{\partial x_j}+i\frac{\partial u}{\partial y_j}\Big).
$
For $1\leq r \leq n$ we further introduce the notation  
\[
\Big(\frac{\partial f}{\partial z}\Big)_r=
\Big(\frac{\partial f}{\partial z_1,},\ldots,\frac{\partial f}{\partial z_r}\Big), 
\qquad
\Big(\frac{\partial f}{\partial \bar{z}}\Big)_r=
\Big(\frac{\partial f}{\partial \bar{z}_1,},\ldots,\frac{\partial f}{\partial \bar{z}_r}\Big), 
\]

The {\em Levi form} is defined by
\[
   \mathcal{L}_{(z)}(f;\xi)= \sum_{j,k =1}^n \frac{\partial^2 f}{\partial z_j\partial\bar{z}_k }(z)
\, \xi_j\overline{\xi}_k, \quad \xi=(\xi_1,\ldots, \xi_n)\in \mathbb{C}^n.
\]
A function $u$ is {\em strictly plurisubharmonic} if and only if 
$\mathcal{L}_{(z)}(f;\cdot)$ is a positive definite Hermitian quadratic form at each point $z\in \Omega$, 
and this is the case precisely when all leading principal minors of the complex 
Hessian matrix of $f$ are positive on $\Omega$, i.e.
the determinant of
\[
H_r^{\mathbb{C}}(f) =\left[ \frac{\partial f}{\partial z_j \partial \overline{z}_k}  \right]_{j,k=1}^r 
\]
is positive for every $r\in \{1,2,\ldots,n\}$. In particular, $H_n^{\mathbb{C}}(f)$ is the complex Hessian matrix of $f$.

To simplify the computations of the complex Hessian matrices or its determinants in the procedings of this section, 
let us also introduce the following matrix notation:
\begin{align}\label{noteM}
&\xi\, \eta^T=\sum_{j=1}^r\xi _j\eta _j,\qquad \xi=(\xi _1,\ldots,\xi _r),\quad \eta=(\eta _1,\ldots,\eta _r),\\
&\xi^T \eta=\left[ \xi _j \eta _k \right]_{j,k=1}^r. \nonumber
\end{align}

Before starting with the complex Hessians of functions, which are related to totally real subspaces, we prove a simple fact on the determinant of a sum of a matrix a some matrices of rank one. It is stated in the above notation.

\begin{lemma}\label{detlema}
Let $u_1,\ldots,u_s,v_1,\ldots,v_s \in \mathbb{C}^2$, where $s\in \mathbb{N}$, and let $B\in \mathbb{C}^{2\times 2}$. Then 
\begin{align*}
       \det &\big(B+\sum_{l=1}^s u_l v_l^T\big)=\det (B)+\Trace (B)\sum_{l=1}^s v^T_lu_l-\sum_{l=1}^sv_l^TBu_l +\det\big(\sum_{l=1}^s u_l v_l^T\big),\\
                                       \det &(\sum_{l=1}^s u_l v_l^T)=\frac{1}{2}\sum_{l,m=1}^s
                                           (u_l^Tv_l u_m^ Tv_m-u_l^Tv_m u_m^Tv_l).\nonumber                                          
\end{align*}
\end{lemma}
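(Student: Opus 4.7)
The plan is to prove both identities by direct $2\times 2$ computation. Since every matrix in sight lives in $\mathbb{C}^{2\times 2}$, the determinant is just a quadratic polynomial in the entries and multilinearity does most of the work. First I would use the multilinearity of $\det$ in columns to expand
\[
\det(B+M)=\det B+\Delta(B,M)+\det M,
\]
where $\Delta(B,M)$ is the cross-term bilinear in the two arguments. A short direct computation with the $2\times 2$ entries shows $\Delta(B,M)=\Trace(B)\Trace(M)-\Trace(BM)$, which is the standard identity $\det(B+M)-\det B-\det M=\Trace(B)\Trace(M)-\Trace(BM)$ valid for $2\times 2$ matrices.

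With this in hand, I would set $M=\sum_{l} u_l v_l^T$ and invoke the cyclic property of trace to get $\Trace(u_l v_l^T)=v_l^T u_l$ and $\Trace(B\,u_l v_l^T)=v_l^T B u_l$. Summing over $l$ gives $\Trace(M)=\sum_{l} v_l^T u_l$ and $\Trace(BM)=\sum_{l} v_l^T B u_l$, which plugged into the expansion above yields the first identity.

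For the second identity, I would compute the entries $M_{ij}=\sum_{l} u_{i,l}v_{j,l}$ of $M=\sum_{l} u_l v_l^T$ and then expand $\det M=M_{11}M_{22}-M_{12}M_{21}$ into a double sum
\[
\det M=\sum_{l,m}u_{1,l}u_{2,m}\bigl(v_{1,l}v_{2,m}-v_{2,l}v_{1,m}\bigr).
\]
The factor in parentheses is antisymmetric under $l\leftrightarrow m$, so symmetrizing the $u$-coefficient produces
\[
\det M=\tfrac{1}{2}\sum_{l,m}(u_{1,l}u_{2,m}-u_{1,m}u_{2,l})(v_{1,l}v_{2,m}-v_{2,l}v_{1,m}).
\]
A short algebraic check then identifies this factored product with $u_l^T v_l\,u_m^T v_m-u_l^T v_m\,u_m^T v_l$, giving the second claim.

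There is no serious obstacle here; both identities are bookkeeping computations intrinsic to dimension two. The only point requiring care is the symmetrization step in part two, where the factor $\tfrac{1}{2}$ arises precisely from the antisymmetry under $l\leftrightarrow m$, and this is the cleanest way to see why the sum is naturally taken over all ordered pairs rather than over $l<m$ alone.
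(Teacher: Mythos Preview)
Your argument is correct, but it proceeds differently from the paper. The paper proves the lemma by induction on $s$: the base case $s=1$ is the rank-one update formula $\det(B+uv^T)=\det(B)+\Trace(B)\,u^Tv - v^TBu$, and the inductive step applies this same formula with $B$ replaced by $B+\sum_{l=1}^{s}u_lv_l^T$, then regroups terms. You instead establish in one stroke the general $2\times 2$ identity $\det(B+M)=\det B+\det M+\Trace(B)\Trace(M)-\Trace(BM)$ and specialize to $M=\sum_l u_lv_l^T$ via the cyclicity of the trace; the second formula you obtain by expanding $\det M$ directly and symmetrizing. Your route is more compact and avoids induction altogether, at the small cost of checking the trace identity for $\Delta(B,M)$ once; the paper's inductive argument, on the other hand, makes it transparent how the rank-one case (which is what is actually used later in Lemma~\ref{lemaformula}) generates the full statement. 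Both arguments are purely bookkeeping in dimension two, and neither requires any idea beyond what you have written.
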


\begin{proof}
We prove the lemma by induction on $s$. The case $s=1$ is a simple matrix identity
\begin{equation}\label{detizrazpomoc}
\det (B+uv^T)=\det (B) +\Trace (B)u^T v-v^TBu, \qquad B\in \mathbb{C}^{2\times 2},\qquad u,v\in \mathbb{C}^2.
\end{equation}

Next, we assume that the statement holds for some $s\in \mathbb{N}$. Applying (\ref{detizrazpomoc}), we then get
\begin{align*}
\det \big(B&+\sum_{l=1} ^{s+1}u_l v_l ^T\big)=\det (B+\sum_{l=1} ^{s}u_l v_l ^T)+\Trace (B+\sum_{l=1} ^{s}u_l v_l ^T)u_{s+1}^T v_{s}\\
&\qquad \qquad \qquad -v_{s+1}^T\big(B+\sum_{l=1} ^{s}u_l v_l ^T\big)u_{s+1}\\
=&\det (B)+\Trace (B)\sum_{l=1}^s v^T_lu_l-\sum_{l=1}^sv_l^TBu_l +\frac{1}{2}\sum_{l,m=1}^s (u_l^Tv_l u_m^ Tv_m-u_l^Tv_m u_m^Tv_l)\\
&+\Trace (B)u_{s+1}^T v_{s}+\sum_{l=1} ^{s}u_l^T v_l)u_{s+1}^T v_{s}-v_{s+1}^TBu_{s+1}-v_{s+1}^T\sum_{l=1} ^{s}u_l v_l ^Tu_{s+1}
\end{align*}
Regrouping the like terms now easily concludes the proof.
\end{proof}

As in \cite{Tadej2} we prefer to work with polynomials in squared Euclidean distance functions to maximal totally real subspaces in $\mathbb{C}^n$.
The following lemma is a preparation for our key result, Lemma \ref{lemaocena}.

\begin{lemma} \label{lemaformula}
Let $A$ be a diagonalizable real $n\times n$ matrix, and let $d_M$ and $d_N$ respectively be the squared 
Euclidean distance functions to $M=(A+iI)\mathbb{R}^n$ and $N=\mathbb{R}^2$. Assume further that 
$P\in \mathbb{R}[u,v]$ is a polynomial in two variables and set 
$\Delta=\frac{1}{2}(\frac{\partial P}{\partial u}{\scriptstyle (d_M,d_N)} +\frac{\partial P}{\partial v}{\scriptstyle (d_M,d_N)})$. 
Then the function
\[
\rho= P(d_M,d_N).
\]
has the following properties:
\begin{enumerate}
\item \label{pr0}
\vspace{-4mm}
\begin{align}\label{formula0}
               \det (H_1^{\mathbb{C}}(\rho))=&\Delta+\tfrac{\partial^2 P}{\partial u^2}{\scriptstyle (d_M,d_N)} |\tfrac{\partial d_M}{\partial z_1}|^2+\tfrac{\partial^2 P}{\partial v^2}{\scriptstyle (d_M,d_N)}  |\tfrac{\partial d_N}{\partial z_1}|^2\\\nonumber
                                             &+2\tfrac{\partial^2 P}{\partial u \partial v}{\scriptstyle (d_M,d_N)} 
                                                      \Rea \big\langle\tfrac{\partial d_M}{\partial z_1},\tfrac{\partial d_N}{\partial z_1}\big\rangle.
                                                      \end{align}
\item \label{pr1} If $A$ is a real diagonal matrix, then for every $r\in \{2,\ldots,n\}$ it follows that
\begin{align}\label{formula1}
               \det (H_r^{\mathbb{C}}(\rho))=&\Delta^r+\Delta^{r-1}(\tfrac{\partial^2 P}{\partial u^2}{\scriptstyle (d_M,d_N)} |(\tfrac{\partial d_M}{\partial z})_r|^2+\tfrac{\partial^2 P}{\partial v^2}{\scriptstyle (d_M,d_N)}  |(\tfrac{\partial d_N}{\partial z})_r|^2)\\\nonumber
                                             &+2\Delta^{r-1}\tfrac{\partial^2 P}{\partial u \partial v}{\scriptstyle (d_M,d_N)} 
                                                      \Rea \big\langle(\tfrac{\partial d_M}{\partial z})_r,\left(\tfrac{\partial d_N}{\partial z}\right)_r\big\rangle\\
                                             &+\Delta^{r-2}\det (H^{\mathbb{R}} (P)_{(d_M,d_N)})\sum_{\scriptscriptstyle {1\leq j<k\leq r}}Z_{jk}(A),\nonumber
\end{align}
where $Z_{jk}(A)=\Big(\sum_{m=j,k}|\tfrac{\partial d_M}{\partial z_m}|^2\sum_{m=j,k}|\tfrac{\partial d_N}{\partial z_m}|^2-|\sum_{m=j,k}\tfrac{\partial d_M}{\partial z_m}\tfrac{\partial d_N}{\partial \overline{z}_m}|^2\Big)$ and $H^{\mathbb{R}}(P)$ is the real Hessian of $P$. Furthermore, if $r=n=2$, we have $|(\tfrac{\partial d_M}{\partial z})_2|^2=d_M$, $|(\tfrac{\partial d_N}{\partial z})_2|^2=d_N$ and $Z_{12}(A)=d_M d_N-\Big|\sum_{j=1,2}\tfrac{\partial d_M}{\partial z_j}\tfrac{\partial d_N}{\partial \overline{z}_j}\Big|^2$.
\item \label{pr2}If $n=2$ and $A=\left[\begin{array}{c c}   
	                                            c & -b\\
	                                            b & c
                                       \end{array}\right]$ with $b,c\in\mathbb{R}$, then
\begin{align}\label{formula3}
\nonumber\det (H^{\mathbb{C}}(\rho))=&\Delta(\tfrac{\partial^2 P}{\partial u^2}{\scriptstyle (d_M,d_N)} d_M+\tfrac{\partial^2 P}{\partial v^2}{\scriptstyle (d_M,d_N)} d_N+2\tfrac{\partial^2 P}{\partial u \partial v}{\scriptstyle (d_M,d_N)} \Rea (\sum_{j=1,2}\tfrac{\partial d_M}{\partial z_j}\tfrac{\partial d_N}{\partial \overline{z}_j}))\\
&+\Delta^2+\det (H_2^{\mathbb{R}} (P)_{(d_M,d_N)})\Big(d_M d_N-\big|\sum_{j=1,2}\tfrac{\partial d_M}{\partial z_j}\tfrac{\partial d_N}{\partial \overline{z}_j}\big|^2\Big)\\
&-(\tfrac{b}{1+b^2+c^2}\tfrac{\partial P}{\partial u}{\scriptstyle (d_M,d_N)})^2-\tfrac{2b^2}{1+b^2+c^2}d_M\tfrac{\partial^2 P}{\partial u^2}{\scriptstyle (d_M,d_N)}\tfrac{\partial P}{\partial u}{\scriptstyle (d_M,d_N)}.\nonumber
\end{align} 
\end{enumerate}
\end{lemma}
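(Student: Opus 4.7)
The plan is to expand $H^{\mathbb{C}}(\rho)$ via the chain rule, compute the complex Hessians of $d_M$ and $d_N$ using the explicit formulas from Lemma~\ref{lemadelta}, and then reduce the $r\times r$ minors to $2\times 2$ determinants using Sylvester's identity (or the $2\times 2$ Lemma~\ref{detlema}). Write $u_j=\tfrac{\partial d_M}{\partial z_j}$, $v_j=\tfrac{\partial d_N}{\partial z_j}$, and abbreviate $P_u,P_v,P_{uu},P_{uv},P_{vv}$ for the partial derivatives of $P$ evaluated at $(d_M,d_N)$. The chain rule yields
\begin{equation*}
\tfrac{\partial^2\rho}{\partial z_j\partial\bar z_k}=P_u\tfrac{\partial^2 d_M}{\partial z_j\partial\bar z_k}+P_v\tfrac{\partial^2 d_N}{\partial z_j\partial\bar z_k}+P_{uu}u_j\bar u_k+P_{vv}v_j\bar v_k+P_{uv}(u_j\bar v_k+v_j\bar u_k).
\end{equation*}
A Wirtinger calculation on $d_N=y_1^2+\cdots+y_n^2$ gives $\tfrac{\partial^2 d_N}{\partial z_j\partial\bar z_k}=\tfrac12\delta_{jk}$. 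For $d_M$, diagonalizability of $A$ together with the block decomposition of Lemma~\ref{lemadelta} shows that every real block $a_j$ of $A_0$ contributes $\tfrac12$ on the $j$-th diagonal and nothing off-diagonal, while every complex block $C_j=\bigl(\begin{smallmatrix}c&-b\\b&c\end{smallmatrix}\bigr)$ contributes $\tfrac12$ on both diagonal positions inside the block and $\pm\tfrac{ib}{1+b^2+c^2}$ off-diagonal within it. Since $\tfrac{\partial^2 d_M}{\partial z_1\partial\bar z_1}=\tfrac12$ in either case, the first two summands of the chain rule sum to $\Delta$, which proves part~(\ref{pr0}).

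For part~(\ref{pr1}), diagonality of $A$ makes $H^{\mathbb{C}}(d_M)=H^{\mathbb{C}}(d_N)=\tfrac12 I_n$, so with $X$ the $r\times 2$ matrix whose columns are $(u_1,\ldots,u_r)^{T}$ and $(v_1,\ldots,v_r)^{T}$, and $W=H^{\mathbb{R}}(P)$,
\[
H_r^{\mathbb{C}}(\rho)=\Delta\,I_r+X\,W\,X^{*}.
\]
Sylvester's determinant identity collapses this to $\det H_r^{\mathbb{C}}(\rho)=\Delta^{r-2}\det(\Delta\,I_2+W\,X^{*}X)$, and direct expansion of the $2\times 2$ determinant reproduces the first three terms of (\ref{formula1}) together with a factor $\det(H^{\mathbb{R}}(P))\bigl(|(\tfrac{\partial d_M}{\partial z})_r|^{2}|(\tfrac{\partial d_N}{\partial z})_r|^{2}-\bigl|\bigl\langle(\tfrac{\partial d_M}{\partial z})_r,(\tfrac{\partial d_N}{\partial z})_r\bigr\rangle\bigr|^{2}\bigr)$. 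The remaining identity
\[
|(\tfrac{\partial d_M}{\partial z})_r|^{2}|(\tfrac{\partial d_N}{\partial z})_r|^{2}-\bigl|\bigl\langle(\tfrac{\partial d_M}{\partial z})_r,(\tfrac{\partial d_N}{\partial z})_r\bigr\rangle\bigr|^{2}=\sum_{1\le j<k\le r}Z_{jk}(A)
\]
is a Cauchy--Binet style pair-wise regrouping of the double sums on the left. For the special case $r=n=2$, the claim $|(\tfrac{\partial d_M}{\partial z})_n|^{2}=d_M$ comes from Euler's homogeneity $|\nabla d_M|^{2}=4\,d_M$ applied to the quadratic $d_M$ (and similarly for $d_N$).

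Part~(\ref{pr2}) is the delicate case. For $n=2$ with $A=C$ the Hessian of $d_M$ acquires a skew-Hermitian off-diagonal piece, and
\[
H^{\mathbb{C}}(\rho)=\Delta\,I_2+K\,J+XWX^{*},\qquad K=\tfrac{b\,P_u}{1+b^2+c^2},\qquad J=\begin{pmatrix}0&i\\-i&0\end{pmatrix}.
\]
Since this matrix is only $2\times 2$, I would compute the determinant directly as $h_{11}h_{22}-|h_{12}|^{2}$: the diagonal entries $h_{11},h_{22}$ are blind to $KJ$ and so coincide with the diagonal-$A$ entries of part~(\ref{pr1}), while $h_{12}$ splits as $R+D$, where $R=P_{uu}u_1\bar u_2+P_{uv}(u_1\bar v_2+v_1\bar u_2)+P_{vv}v_1\bar v_2$ is the rank-two piece from part~(\ref{pr1}) and $D=iK$ is the off-diagonal Hessian contribution of $d_M$. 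Hence
\[
\det H^{\mathbb{C}}(\rho)=\bigl(h_{11}h_{22}-|R|^{2}\bigr)-2\Rea(R\bar D)-|D|^{2},
\]
where the parenthesis equals the first two lines of (\ref{formula3}) by the computation of part~(\ref{pr1}). The main obstacle will be collapsing $-2\Rea(R\bar D)-|D|^{2}$ into the precise form of the last line of (\ref{formula3}); this rests on $|D|^{2}=K^{2}$, on $\Ima(v_1\bar v_2)=0$ (immediate from $v_j=-iy_j$), and on the block-specific identity $\Ima(u_1\bar u_2)=\tfrac{b\,d_M}{1+b^2+c^2}$, obtained by expanding $u_j$ from Lemma~\ref{lemadelta}'s formula $d_M=(\phi_1^{2}+\phi_2^{2})/(1+b^2+c^2)$ and collapsing $\phi_1^{2}+\phi_2^{2}=(1+b^2+c^2)d_M$.
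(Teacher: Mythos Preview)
Your approach mirrors the paper's: the same chain-rule expansion, the same Hessians $H^{\mathbb C}(d_M),H^{\mathbb C}(d_N)$, and the same rank-two structure $H_r^{\mathbb C}(\rho)=\Delta I_r+L_r$ in the diagonal case. For part~(\ref{pr1}) you invoke Sylvester's identity $\det(\Delta I_r+XWX^{*})=\Delta^{r-2}\det(\Delta I_2+WX^{*}X)$, whereas the paper expands the characteristic polynomial $\det(\Delta I_r+L_r)=\Delta^r+\Delta^{r-1}\Trace(L_r)+\Delta^{r-2}\sum_{j<k}\Minor_{j,k}(L_r)$, using that $L_r$ has rank at most two; your route is a bit cleaner and avoids the explicit minor bookkeeping, but the content is the same.

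For part~(\ref{pr2}) there is a genuine gap in your list of ingredients. The cross term $-2\Rea(R\bar D)=-2K\,\Ima(R)$ has three pieces, coming from $P_{uu}$, $P_{uv}$, and $P_{vv}$. You correctly handle $\Ima(v_1\bar v_2)=0$ and $\Ima(u_1\bar u_2)=\tfrac{b\,d_M}{1+b^2+c^2}$, but you never address the middle piece $\Ima(u_1\bar v_2+v_1\bar u_2)$. From $v_j=-iy_j$ and the explicit $u_j$ one finds
\[
\Ima(u_1\bar v_2+v_1\bar u_2)=\frac{y_2\phi_1-y_1\phi_2}{1+b^2+c^2},\qquad \phi_1=x_1-cy_1+by_2,\ \ \phi_2=x_2-cy_2-by_1,
\]
which does \emph{not} vanish identically. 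The paper's own proof asserts the corresponding vanishing, so formula~(\ref{formula3}) as stated appears to omit a term proportional to $P_uP_{uv}$; tellingly, the later proof of Lemma~\ref{lemaocena}(\ref{roC}) explicitly computes and bounds exactly the quantity $\Ima(v_2\bar u_1-v_1\bar u_2)$, which would be unnecessary were that term truly absent. So your outline reproduces the paper's argument (including this oversight); to obtain a fully correct identity you must retain the $P_{uv}$ contribution, and you should also note that the $P_{uu}$ contribution carries a denominator $(1+b^2+c^2)^2$, not the single power written in~(\ref{formula3}).
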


\begin{proof}
A straightforward computation yields
\begin{align*}
\frac{\partial^2 \rho}{\partial z_j \partial \overline{z}_k}= &
\frac{\partial^2 P}{\partial u^2}(d_M,d_N)\frac{\partial d_M}{\partial z_j}\frac{\partial d_M}{\partial \overline{z}_k}
+\frac{\partial^2 P}{\partial v^2}(d_M,d_N)\frac{\partial d_N}{\partial z_j}\frac{\partial d_N}{\partial \overline{z}_k}\\
&+\frac{\partial P}{\partial u \partial v}(d_M,d_N)
                                        (\frac{\partial d_M}{\partial z_j}\frac{\partial d_N}{\partial \overline{z}_k}+\frac{\partial d_N}{\partial z_j}\frac{\partial d_M}{\partial \overline{z}_k})\\
&+\frac{\partial P^2}{\partial u}(d_M,d_N)\frac{\partial d_M}{\partial z_j \partial \overline{z}_k}
+\frac{\partial^2 P}{\partial v}(d_M,d_N)\frac{\partial d_N}{\partial z_j \partial \overline{z}_k}                                        
\end{align*}
The leading principal submatrix of the complex Hessian matrix of $\rho $, consisting of the first $r$ rows and 
in the first $r$ columns for $r\in \{1,2,\ldots,n\}$, can hence in matrix notation (see (\ref{noteM})) be writen as 
\begin{align}\label{HC}
\nonumber H_r^{\mathbb{C}}(\rho)=&
\frac{\partial^2 P}{\partial u^2}(d_M,d_N) \Big(\frac{\partial d_M}{\partial z}\Big)_r\Big(\frac{\partial d_M}{\partial \overline{z}}\Big)_r^T
+\frac{\partial^2 P}{\partial v^2}(d_M,d_N)\Big(\frac{\partial d_N}{\partial z}\Big)_r\Big(\frac{\partial d_N}{\partial \overline{z}}\Big)_r^T\\
&+\frac{\partial^2 P}{\partial u \partial v}(d_M,d_N) \left(\Big(\frac{\partial d_M}{\partial z}\Big)_r\Big(\frac{\partial d_N}{\partial \overline{z}}\Big)_r^T+\Big(\frac{\partial d_N}{\partial z}\Big)_r\Big(\frac{\partial d_M}{\partial \overline{z}}\Big)_r\right)^T\\
&+\frac{\partial P}{\partial u}(d_M,d_N)\left[ \frac{\partial^2 d_M}{\partial z_j \partial \overline{z}_k}  \right]_{j,k=1}^r
+\frac{\partial P}{\partial v}(d_M,d_N)\left[ \frac{\partial^2 d_N}{\partial z_j \partial \overline{z}_k}  \right]_{j,k=1}^r.  \nonumber 
\end{align}

Since $N=\mathbb{R}^n$ we immediately get 
\begin{equation}\label{dN}
d_N(x,y)=\sum_{j=1}^ny_j^2, \qquad\qquad \frac{\partial d_N}{\partial z_j}(x,y)=-iy_j, \quad j\in \{1,\ldots,n\}.
\end{equation}

We proceed by proving property (\ref{pr1}). In this case $A=\diag(a_1,\ldots,a_n)$ for $a_1,\ldots,a_n\in \mathbb{R}$ is a real diagonal matrix. Lemma \ref{lemadelta} and a simple computation now yield 
\begin{equation}\label{dM}
d_M(x,y)=\sum_{j=1}^{n}\tfrac{x_j-a_jy_j}{1+a_j^2}, \qquad \tfrac{\partial d_M}{\partial z_j}(x,y)=\tfrac{(1+ia_j)(x_j-a_jy_j)}{1+a_j^2}, \, j\in \{1,\ldots, n\}.
\end{equation}
Further, (\ref{dN}) and (\ref{dM}) easily imply
\[
\left[ \frac{\partial^2 d_M}{\partial z_j \partial \overline{z}_k}  \right]_{j,k=1}^r=\left[ \frac{\partial^2 d_N}{\partial z_j \partial \overline{z}_k}  \right]_{j,k=1}^r=\tfrac{1}{2}I_r,
\]
where $I_r$ is the $r\times r$ identity-matrix. For $\Delta=\frac{1}{2}(\frac{\partial P}{\partial u}(d_M,d_N) +\frac{\partial P}{\partial v}(d_M,d_N))$ we have 
\begin{align*}
H_r^{\mathbb{C}}(\rho) =\Delta I_{r}+L_r,
\end{align*}
where
\begin{align}\label{Lr}
L_r=&\frac{\partial^2 P}{\partial u^2}(d_M,d_N) \Big(\frac{\partial d_M}{\partial z}\Big)_r\Big(\frac{\partial d_M}{\partial \overline{z}}\Big)_r^T
+\frac{\partial^2 P}{\partial v^2}(d_M,d_N)\Big(\frac{\partial d_N}{\partial z}\Big)_r\Big(\frac{\partial d_N}{\partial \overline{z}}\Big)_r^T\\\nonumber
&+\frac{\partial^2 P}{\partial u \partial v}(d_M,d_N) \left(\Big(\frac{\partial d_M}{\partial z}\Big)_r\Big(\frac{\partial d_N}{\partial \overline{z}}\Big)_r^T+\Big(\frac{\partial d_N}{\partial z}\Big)_r\Big(\frac{\partial d_M}{\partial \overline{z}}\Big)\right)_r^T\nonumber
\end{align}

Observe that for $2\leq r\leq n$ the matrix $L_r$ is of rank two, hence its possible minors of order greater than $2$ must be zero. Further, it is a well known fact about characteristic polynomial of the $r\times r$ matrix which states that for any $r\times r$ matrix $B$ we have
\begin{equation}\label{char}
\det (B+\lambda I_r)=\lambda^r+c_{r-1}\lambda^{r-1}+\ldots +c_1\lambda +c_0, \lambda\in \mathbb{R}
\end{equation}
where the coefficient $c_j$ for $j\in \{1,\ldots,r-1\}$ is equal to the sum of all principal minors of the matrix $B$ or order $r-j$. Therefore
\begin{align}\label{detHrCR}
\det (H_r^{\mathbb{C}}(\rho))&=\Delta ^r+\Trace (L_r)\Delta ^{r-1}+\Delta ^{r-2}\sum_{1\leq j<k\leq r} \Minor_{j,k} (L_r)
\end{align}
where $\Minor_{j,k} (L_r)$ is the corresponding principal minor of the matrix $L_r$ with respect to the $j$-th 
and $k$-th row and column; this is the determinant of the submatrix of $L_r$, formed by taking the elements 
in the $j$-th or the $k$-th column, and in the $j$-th or the $k$-th row.

Using matrix notation (\ref{noteM}) we can write
\begin{align}\label{TrL}
      \Trace (L_r)=&\frac{\partial^2 P}{\partial u^2}(d_M,d_N) \Big(\frac{\partial d_M}{\partial z}\Big)_r^T
                   \Big(\frac{\partial d_M}{\partial \overline{z}}\Big)_r
                          +\frac{\partial^2 P}{\partial v^2}(d_M,d_N)\Big(\frac{\partial d_N}{\partial z}\Big)_r^T
                              \Big(\frac{\partial d_N}{\partial \overline{z}}\Big)_r \nonumber\\
                &+\frac{\partial^2 P}{\partial u\partial v}(d_M,d_N) \left(\Big(\frac{\partial d_M}{\partial z}\Big)_r^T     \left(\frac{\partial d_N}{\partial \overline{z}}\right)_r
                           +\Big(\frac{\partial d_N}{\partial z}\Big)_r^T\Big(\frac{\partial d_M}{\partial \overline{z}}\Big)_r\right).
\end{align}
Also, we apply Lemma \ref{detlema} for the case where $B=0$ and the sum of rank-one matrices is equal to $L_2$. By 
regrouping the terms we get
\begin{align}\label{Lrjk}
      &\Minor_{j,k} (L_r)=\big(\tfrac{\partial^2 P}{\partial u^2}{\scriptstyle (d_M,d_N)}\tfrac{\partial^2 P}{\partial v^2}{\scriptstyle (d_M,d_N)} -(\tfrac{\partial^2 P}{\partial u \partial v}{\scriptstyle (d_M,d_N)})^2\big)\\
                 \cdot \,\big((\tfrac{\partial d_M}{\partial z_j},&\tfrac{\partial d_M}{\partial z_k})^T
                  (\tfrac{\partial  d_M}{\partial \overline{z}_j},\tfrac{\partial d_M}{\partial \overline{z}_k})(\tfrac{\partial d_N}{\partial z_j},\tfrac{\partial d_N}{\partial z_k})^T
                  (\tfrac{\partial d_N}{\partial \overline{z}_j},\tfrac{\partial d_N}{\partial \overline{z}_k})-|(\tfrac{\partial d_M}{\partial z_j},\tfrac{\partial d_M}{\partial z_k})^T
                  (\tfrac{\partial d_N}{\partial \overline{z}_j},\tfrac{\partial d_N}{\partial \overline{z}_k})|^2\big)\nonumber
\end{align}
Finally, (\ref{formula1}) now follows immediately from (\ref{detHrCR}), (\ref{TrL}), (\ref{Lrjk}). To conclude the 
proof of (\ref{pr1}), we set $n=r=2$ and using (\ref{dN}), (\ref{dM}), we see that  
\begin{equation}\label{dMabs}
                                       |\left(\tfrac{\partial d_M}{\partial z}\right)_2|^2=\sum_{j=1}^2|\tfrac{\partial d_M}{\partial z_j}|^2=d_M,\qquad
                                       |\left(\tfrac{\partial^2 d_N}{\partial z}\right)_2|^2=\sum_{j=1}^2|\tfrac{\partial d_N}{\partial z_j}|^2=d_N.
\end{equation}

Next, we prove (\ref{pr2}). By (\ref{lemadelta}) we now have 
\begin{equation}\label{dMC}
d_M(x_1,y_1,x_2,y)=\frac{(x_1-cy_1+by_2)^2}{1+b^2+c^2}+\frac{(x_2-cy_2-by_1)^2}{1+b^2+c^2}.
\end{equation}
Next, a simple computation shows that for any $j\in \{1,\ldots, n\}$: 
\begin{align}\label{dMCodvod}
\frac{\partial d_M}{\partial z_1}&=\frac{1+ic}{1+b^2+c^2}(x_1-cy_1+by_2)+\frac{ib}{1+b^2+c^2}(x_2-cy_2-by_1), \\
\frac{\partial d_M}{\partial z_2}&=\frac{1+ic}{1+b^2+c^2}(x_2-cy_2-by_1)-\frac{ib}{1+b^2+c^2}(x_1-cy_1+by_2) \nonumber
\end{align}
and
\[
\left[ \frac{\partial d_M}{\partial z_j \partial \overline{z}_k}  \right]_{j,k=1}^2=\left[\begin{array}{c c}   
	                                            \frac{1}{2} & -\frac{ib}{1+b^2+c^2}\\
	                                            \frac{ib}{1+b^2+c^2} & \frac{1}{2}
                                       \end{array}\right],
\]
Remember that $\Delta=\frac{1}{2}(\frac{\partial P}{\partial u}(d_M,d_N) +\frac{\partial P}{\partial v}(d_M,d_N))$ and by setting  
$\varepsilon=\frac{b}{1+b^2+c^2}\frac{\partial P}{\partial u}(d_M,d_N)$, we have 
\begin{align*}
H^{\mathbb{C}}(\rho) =\Delta I_{2}+i\varepsilon K_2+L_2,
\end{align*}
where $L_2$ is as in (\ref{Lr}) and $K_2=\left[\begin{array}{c c}   
	                                            0 & -1\\
	                                            1 & 0
                                       \end{array}\right]$.

From (\ref{char}) for $B=i\varepsilon K_2+L_2$ and $\lambda=\Delta$, and using the fact $\Trace (i\varepsilon K_2+L_2)=\Trace (L_2)$, it follows that
\begin{align}\label{detHC22}
\det (H_2^{\mathbb{C}}(\rho))&=\Delta ^2+\Delta\Trace (L_2) +\det (i\varepsilon K_2+L_2).
\end{align}
It is easy to see that
\[
\left(\tfrac{\partial d_M}{\partial z}\right)_2^TK_2\left(\tfrac{\partial d_M}{\partial \overline{z}}\right)_2=2ib d_M, \qquad
\left(\tfrac{\partial d_N}{\partial z}\right)_2^TK_2\left(\tfrac{\partial d_N}{\partial \overline{z}}\right)_2=0,
\]
\[
\left(\tfrac{\partial d_N}{\partial z}\right)_2^TK_2\left(\tfrac{\partial d_M}{\partial \overline{z}}\right)_2+\left(\tfrac{\partial d_M}{\partial z}\right)_2^TK_2\left(\tfrac{\partial d_N}{\partial \overline{z}}\right)_2=0,
\]
Thus, applying Lemma \ref{detlema} for $B=K_2$, we obtain 
\[
\det (i\varepsilon K_2+L_2)=-\varepsilon^2-2\tfrac{\partial^2 P}{\scriptstyle (d_M,d_N)} b\varepsilon d_M+\det (L_2).
\]
Since (\ref{dMabs}) holds also in case when $d_M$ of the form 
(\ref{dMC}), then by using (\ref{Lrjk}) for $j=1,k=2$ we obtain that 
\[
\det (L_2)=\Minor_{1,2}(L_2)=\det (H_2^{\mathbb{R}} (P)_{(d_M,d_N)})\Big(d_M d_N-\big|\sum_{j=1,2}\tfrac{\partial d_M}{\partial z_j}\tfrac{\partial d_N}{\partial \overline{z}_j}\big|^2\Big).
\] 
Using also (\ref{TrL}) for $r=2$ and (\ref{detHC22}), we obtain (\ref{pr2}).

Since $\frac{\partial d_M}{\partial z_1 \partial \overline{z}_1}=\frac{1}{2}$ (see (\ref{dM}) and (\ref{dMC})), 
property (\ref{pr0}) follows immediately from (\ref{HC}).
\end{proof}

The following lemma is esential in the proof of Theorem \ref{izrek}, where regular Stein neighborhoods are constructed.

\begin{lemma}\label{lemaocena}
Let $A$ be a real diagonalizable $n\times n$ matrix, and let $d_M$ and $d_N$ respectively be the squared Euclidean distance functions to $M=(A+iI)\mathbb{R}^n$ and $N=\mathbb{R}^n$. If $A$ satisfy one of the conditions
\begin{enumerate}
\item \label{roaa} $A=\diag (a,\ldots,a)$, where $|a|\leq \frac{1}{\sqrt{15}}$ (respectively $|a|\leq \frac{2}{\sqrt{5}}$) for $n\geq2$ (respectively $n=2)$,
\item \label{ro2}  $A=\diag(a_1,a_2)$, where $|a_1|,|a_2|\leq\frac{1}{\sqrt{15}}$, (n=2),
\item \label{roC}  $A=\left[\begin{array}{c c}   
	                                            c & -b\\
	                                            b & c
                                       \end{array}\right]$, where $|c|,|b|\leq \frac{1}{16}$, (n=2),
\end{enumerate}
then there exists a homogeneous polynomial $P\in \mathbb{R}[u,v]$ such that the function
\[
\rho= P(d_M,d_N).
\]
is strictly plurisubharmonic everywhere except at the origin, and such that
\begin{align}\label{pogojP}
&\{P=0\}\cap \mathbb{R}_+^2=(\mathbb{R}_+\times \{0\})\cup (\{0\}\times \mathbb{R}_+),\,\,\\
& \{\nabla P =0\}\cap \mathbb{R}_+^2=\{0\},\quad\tfrac{\partial P}{\partial u}(u,0)=0=\tfrac{\partial P}{\partial v}(0,v),\,\,u,v\in \mathbb{R}_+.\nonumber
\end{align}
\end{lemma}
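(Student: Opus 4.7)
The plan is to construct $P$ explicitly as a homogeneous polynomial tailored to the three cases, then verify plurisubharmonicity directly via the determinantal formulas in Lemma~\ref{lemaformula}.

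First I would propose candidates of the form $P(u,v)=uv\,Q(u,v)$, with $Q$ a symmetric homogeneous polynomial that is strictly positive on $\mathbb{R}_{+}^{2}\setminus\{0\}$: for instance $Q(u,v)=(u+v)^{k}$, or more generally $Q(u,v)=\alpha(u+v)^{k}+\beta(u-v)^{2m}(u+v)^{\ell}$ with integers $k,\ell,m$ and positive weights $\alpha,\beta$ to be optimised. The prefactor $uv$ automatically enforces \eqref{pogojP}: $P$ vanishes exactly on the coordinate axes of $\mathbb{R}_{+}^{2}$, $\partial_{u}P(u,0)=vQ(u,v)|_{v=0}=0$ and $\partial_{v}P(0,v)=0$ because every monomial of $P$ contains both $u$ and $v$, and positivity of $Q$ off the origin gives $\nabla P\neq 0$ there. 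A simple candidate like $P=uv$ fails already at $A=0$ (e.g.\ at $x=e_{1}$, $y=e_{2}$ one computes $\det H_{2}^{\mathbb{C}}(\rho)=0$), which is why the extra factor $Q$ is needed.

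Next I would substitute $\rho=P(d_{M},d_{N})$ into Lemma~\ref{lemaformula}. For conditions (\ref{roaa})--(\ref{ro2}) the matrix $A$ is diagonal, so part (\ref{pr0}) handles $r=1$ and part (\ref{pr1}) handles $2\leq r\leq n$; for condition (\ref{roC}) the refined formula (\ref{pr2}) is needed to track the extra $b$-dependent correction. Using \eqref{dM}, \eqref{dN}, \eqref{dMC}, \eqref{dMCodvod}, the quantities $|\partial d_{M}/\partial z_{j}|^{2}$, $|\partial d_{N}/\partial z_{j}|^{2}$, $(\partial d_{M}/\partial z_{j})(\partial d_{N}/\partial\overline{z}_{j})$ and the cross terms $Z_{jk}(A)$ become explicit real quadratic forms in $(x,y)$ whose coefficients are rational in the entries of $A$ and equal their $A=0$ values up to $O(|A|)$ corrections.

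The core task is then to show that for the chosen $P$ the expression $\det H_{r}^{\mathbb{C}}(\rho)$ is strictly positive outside the origin for every $1\leq r\leq n$. The dominant positive contribution is $\Delta^{r}$ (all relevant derivatives of $P$ evaluated at $(d_{M},d_{N})$ being non-negative on $\mathbb{R}_{+}^{2}\setminus\{0\}$); the middle term proportional to $\Rea\langle(\partial d_{M}/\partial z)_{r},(\partial d_{N}/\partial z)_{r}\rangle$ is $O(|A|)$ and is bounded by the leading one via Cauchy--Schwarz; the ``bad'' term $\Delta^{r-2}\det H^{\mathbb{R}}(P)\sum_{j<k}Z_{jk}(A)$, which is negative when $\det H^{\mathbb{R}}(P)<0$ (as it typically is for a factored $P=uvQ$), is estimated by expanding $Z_{jk}(A)$ about $A=0$, where $Z_{jk}(0)=(x_{j}y_{k}-x_{k}y_{j})^{2}$, and comparing against $\Delta^{r}$, which carries the same degree of homogeneity in $(x,y)$. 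The free parameters in $Q$ are then tuned to maximise the allowable size of the entries of $A$, producing the thresholds $1/\sqrt{15}$, $2/\sqrt{5}$ and $1/16$.

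The hard part will be the intermediate minors $1<r<n$ in condition (\ref{roaa}), since the $r=n=2$ simplification $|(\partial d_{M}/\partial z)_{2}|^{2}=d_{M}$ is not available, and the balance between $\Delta^{r}$ and the negative $\Delta^{r-2}\sum_{j<k}Z_{jk}(A)$ must be made uniform in $r$. A natural approach is to diagonalise the quadratic form $\sum_{j<k}Z_{jk}(A)$ in the basis $(x_{j}-a_{j}y_{j},\,y_{j})$ and use the identity $\sum_{j<k}(\alpha_{j}\beta_{k}-\alpha_{k}\beta_{j})^{2}=\bigl(\sum_{j}\alpha_{j}^{2}\bigr)\bigl(\sum_{j}\beta_{j}^{2}\bigr)-\bigl(\sum_{j}\alpha_{j}\beta_{j}\bigr)^{2}$ together with AM--GM to absorb the error into $\Delta^{r}$; in condition (\ref{roC}) the additional $-\varepsilon^{2}$ contribution in (\ref{pr2}) explains the stricter bound $|b|,|c|\leq 1/16$.
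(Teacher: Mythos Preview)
Your plan matches the paper's approach: the paper takes $P(u,v)=uv(u+v)$ for the bounds $1/\sqrt{15}$ and $1/16$, and $P(u,v)=uv(u^{2}+5uv+v^{2})$ for the sharper $n=2$ bound $2/\sqrt{5}$, then feeds these into Lemma~\ref{lemaformula} together with exactly the Cauchy--Schwarz estimate and the Lagrange identity for $\sum_{j<k}Z_{jk}$ that you describe. The only divergence from your sketch is that the thresholds are not obtained by optimising over a family of $Q$'s; rather, with $P$ fixed and $\vartheta=|a|/\sqrt{1+a^{2}}$ set to a convenient value ($\tfrac14$ and $\tfrac23$ respectively), the paper regroups the resulting homogeneous expression in $d_{M},d_{N},d_{M,r},d_{N,r}$ by hand into a sum of manifestly non-negative terms plus a strictly positive remainder, and the stated constants are simply those for which this particular decomposition succeeds.
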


\begin{proof}
Recall first that $d_N(x,y)=\sum_{j=1}^ny_j^2$ and $\frac{\partial d_N}{\partial z_j}(x,y)=-iy_j$, $j\in \{1,\ldots,n\}$.

When $A=\diag(a_1,\ldots,a_n)$ is a diagonal matrix, then 
$d_{M}(x,y)=\sum_{j=1}^{n}\frac{(x_j-a_jy_j)^2}{1+a_j^2}$ (see Lemma \ref{lemadelta}) with $\tfrac{\partial d_M}{\partial z_j}(x,y)=\tfrac{(1+ia_j)(x_j-a_jy_j)}{1+a_j^2}$ and
\begin{equation}\label{YRe}
\Rea \big\langle(\tfrac{\partial d_M}{\partial z}{\scriptstyle (x,y)})_r,(\tfrac{\partial d_N}{\partial z}{\scriptstyle (x,y)})_r\big\rangle=-\sum_{j=1}^r\tfrac{a_j(x_j-a_jy_j)y_j}{1+a_j^2},\qquad r\in \{1,\ldots,n\}.
\end{equation}
Next, for $r\in \{1,\ldots,n\}$ we introduce the following useful notation 
\begin{align}\label{notMNr}
&d_{M,r}{\scriptstyle (x,y)}=\big|(\tfrac{\partial d_M}{\partial z}{\scriptstyle (x,y)})_r\big|^2=\sum_{j=1}^r\tfrac{(x_j-a_jy_j)^2}{1+a_j^2},\qquad \\%
&d_{N,r}{\scriptstyle (x,y)}=\big|(\tfrac{\partial d_N}{\partial z}{\scriptstyle (x,y)})_r\big|^2=\sum_{j=1}^r y_j^2,\qquad \nonumber\\
&\vartheta_r=\max_{1\leq j\leq r}\tfrac{|a_j|}{\sqrt{1+a_j^2}}.\nonumber
\end{align}
Using the triangle and the Cauchy-Schwarz inequality, respectively, we then get 
\begin{equation}\label{ocenaRe}
\Big|\Rea \big\langle(\tfrac{\partial d_M}{\partial z})_r,(\tfrac{\partial d_N}{\partial z})_r\big\rangle\Big| 
\leq  \vartheta_r (d_{M,r} d_{N,r})^{\frac{1}{2}}.\nonumber 
\end{equation}
We also compute the expression $Z_{jk}$ in Lemma \ref{lemaformula} (\ref{pr1}):
\begin{align*}
   Z_{jk}(x,y)=&\sum_{m=j,k}\big|\tfrac{\partial d_M}{\partial z_m}{\scriptstyle (x,y)}\big|^2\sum_{m=j,k}\big|\tfrac{\partial d_N}{\partial z_m}{\scriptstyle (x,y)}\big|^2-\Big|\sum_{m=j,k}\tfrac{\partial d_M}{\partial z_m}{\scriptstyle (x,y)}\tfrac{\partial d_N}{\partial \overline{z}_m}{\scriptstyle (x,y)}\Big|^2\\
   =& \tfrac{(x_j-a_jy_j)^2y_j^2}{1+a_j^2} -\tfrac{2(1+a_ja_k)(x_j-a_jy_j)(x_k-a_ky_k)y_jy_k}{(1+a_j^2)(1+a_k^2)} + \tfrac{(x_k-a_ky_k)^2y_k^2}{1+a_k^2} .
\end{align*}

Let us now consider the case $A=\diag (a,\ldots,a)\in \mathbb{R}^{n\times n}$. We see that 
\begin{align}\label{vsotaZjk}
\sum_{\scriptscriptstyle {1\leq j<k\leq r}}Z_{jk}(x,y)
=& \sum_{\scriptscriptstyle {1\leq j<k\leq r}}\tfrac{1}{1+a^2}\big((x_j-a y_j)y_j-(x_k-a y_k)y_k\big)^2\\
=&\sum_{j=1}^r\tfrac{(x_j-a y_j)^2}{1+a^2}\sum_{j=1}^ry_j^2-\tfrac{1}{1+a^2}\Big(\sum_{j=1}^r(x_j-a y_j)y_j\Big)^2,\nonumber
\end{align}
where the last equality is obtained by Lagrange's identity (see \cite[p. 38-39]{CS}). Thus
\begin{equation}\label{ocenaZjk}
0\leq \sum_{\scriptscriptstyle {1\leq j<k\leq r}}Z_{jk}\leq d_{M,r} d_{N,r}, \qquad 2\leq r \leq n.
\end{equation}
Next, we apply Lemma \ref{lemaformula} (\ref{pr1}) to $P(u,v)=u^2v+v^2u$ with $\det (H^{\mathbb{R}} (P)_{(u,v)})=-4(u^2+uv +v^2)$. Together with (\ref{notMNr}), (\ref{ocenaRe}), (\ref{ocenaZjk}), we get the lower bound for the determinant of the Hessian matrix of $\rho=P(d_M,d_N)$ on $\mathbb{C}^n\setminus \{0\}$: 
\begin{align}\label{Hro1}
   \nonumber           \Delta^{2-r} \det (H_r^{\mathbb{C}}(\rho))\geq &\Delta^2+\Delta\big(2d_{N} d_{M,r}+2d_M d_{N,r}-4\vartheta_r(d_M+d_N)(d_{M,r} d_{N,r})^{\frac{1}{2}}\big)\\
                                             &-4(d_M^2+d_Md_N+d_N^2)d_{M,r}d_{N,r},
\end{align}
where $\Delta=\frac{1}{2}(\frac{\partial P}{\partial u}{\scriptstyle (d_M,d_N)}+\frac{\partial P}{\partial v}{\scriptstyle (d_M,d_N)})=\tfrac{1}{2}(d_M^2+4d_M d_N+d_N^2)$ and $2\leq r\leq n$. For every $r\in \{1,\ldots,n\}$ let the expression on the right-hand side of the inequality (\ref{Hro1}) be given as $\Psi_{r,\vartheta_r}$. Taking 
$\vartheta_r=\tfrac{1}{4}$ and regrouping the terms of $\Psi_{r,\frac{1}{4}}$ yields
\begin{align}\label{polPr}
\Psi_{r,\frac{1}{4}}=&d_Md_N(2 d_{M,r}d_N+2 d_{N,r}d_M-4d_{M,r}d_{N,r})\nonumber \\
&+(d_M^2+d_N^2)(2d_M d_N+d_M d_{N,r}+d_{M,r}d_N-4d_{M,r}d_{N,r})\nonumber\\
&+d_M^2d_N\big(2d_{N,r}-\tfrac{5}{2}(d_{N,r}d_{M,r})^{\frac{1}{2}}+\tfrac{25}{32}d_{M}\big) \\
&+d_N^2d_M\big(2d_{M,r}-\tfrac{5}{2}(d_{N,r}d_{M,r})^{\frac{1}{2}}+\tfrac{25}{32}d_{N}\big)\nonumber \\
&+\tfrac{1}{8}d_M^3\big(d_M+4(d_{M,r}d_{N,r})^{\frac{1}{2}}+4d_N\big)\nonumber +\tfrac{1}{8}d_N^3\big(d_N+4(d_{M,r}d_{N,r})^{\frac{1}{2}}+4d_M\big)\nonumber \\
&(\tfrac{1}{10}d_M^4-\tfrac{41}{32}d_M^3d_N+\tfrac{9}{2}d_M^2d_N^2-\tfrac{41}{32}d_Md_N^3+\tfrac{1}{10}d_N^4)+\tfrac{1}{40}(d_M^4+d_N^4).\nonumber
\end{align}
Since $d_M\geq d_{M,r}$ and $d_M\geq d_{M,r}$ for all $r$, the first six terms in the above sum are 
non-negative. The eighth term is a symetric homogeneous polynomial in $d_M$ and $d_N$. It is positive 
on $(M\cup N)\setminus \{0\}$, while on $\mathbb{C}^n\setminus (M\cup N)$ we can respectively factor out $d_M^2d_N^2$, setting $W=\frac{d_M}{d_N}+\frac{d_N}{d_M}$, and after regrouping the terms, we obtain 
$d_M^2d_N^2(\frac{1}{10}W^2-\frac{41}{32}W+\frac{43}{10})$, which is positive there. The last term is clearly positive everywhere except at the origin, thus it follows that 
$\Psi_{r,\frac{1}{4}}$ is positive on $\mathbb{C}^n\setminus \{0\}$. As $\Psi_{r,\vartheta_r}$ is a decreasing function with respect to $\theta_r$ and $\vartheta_r=\frac{|a|}{\sqrt{1+a^2}}$ (see (\ref{notMNr}) and remember that $A=\diag(a,\ldots,a)$) increases with respect to $|a|$, we thus have $ \Psi_{r,\vartheta_r}> 0$ on $\mathbb{C}^n\setminus \{0\}$ for $\vartheta_r\leq \frac{1}{4}$ (hence $|a|\leq\tfrac{1}{\sqrt{15}}$), $r\in\{1,\ldots,n\}$. From (\ref{Hro1}), (\ref{polPr}), it immediately follows that $\det (H_r^{\mathbb{C}}(\rho))\geq 0$, with equality precisely at the origin, provided that $2\leq r\leq n$, $|a|\leq \frac{1}{\sqrt{15}}$, and in addition we have
\begin{align*}
\det \big(H_r^{\mathbb{C}}(\rho)\big)(x,y)& \geq \tfrac{1}{40} (d_{M}^4+d_N^4)\big(\tfrac{1}{2}(d_{M}^2+d_{M}d_N+d_N^2)\big)^{r-2}(x,y)\\
&\geq \tfrac{1}{5\cdot 2^{r+1}}(d_{M}^{2r}+d_N^{2r})(x,y) \geq \tfrac{1}{5\cdot 2^{r+1}}\sum_{j=1}^n\Big(\big(\tfrac{(x_j-ay_j)^{2}}{1+a^2}\big)^r+y_j^{2r}\Big).
\end{align*}
Further, by combining Lemma \ref{lemaformula} (\ref{pr0}) and (\ref{notMNr}), (\ref{ocenaRe}), we obtain for $\vartheta_1\leq \frac{1}{4}$ that
\begin{align}\label{HC10}
        H_1^{\mathbb{C}}(\rho) \geq & \Delta+\big(2d_{N} d_{M,1}+2d_M d_{N,1}-4\vartheta_1(d_M+d_N)(d_{M,1} d_{N,1})^{\frac{1}{2}}\big)\nonumber\\
                               \geq & d_M(\tfrac{1}{8}d_M-(d_{M,1} d_{N,1})^{\frac{1}{2}}+d_{N,1})+d_N(\tfrac{1}{8}d_N-(d_{M,1} d_{N,1})^{\frac{1}{2}}+d_{M,1})\\
                               & +\tfrac{3}{8}(d_M^2+d_N^2)+2d_Md_N. \nonumber
\end{align}
and we deduce that $H_1^{\mathbb{C}}(\rho)$ vanishes at the origin and is positive elsewhere for $|a|\leq\tfrac{1}{\sqrt{15}}$ (hence $\vartheta_r\leq \frac{1}{4}$). This concludes the proof that $\rho=d_M^2d_N+d_Md_N^2$ is strictly plu\-ri\-sub\-harmo\-nic precisely on $\mathbb{C}^n\setminus\{0\}$, 
if $M=M(\diag(a,\ldots,a))$, $|a|\leq\tfrac{1}{\sqrt{15}}$.

We proceed with the case $A=\diag(a_1,a_2)$. 
%
%
Using Lemma \ref{lemaformula} (\ref{pr0}),(\ref{pr1}), again applied to $P(u,v)=u^2v+v^2u$ (and $\rho=P(d_M,d_N)$), 
together with the estimates (\ref{notMNr}), (\ref{ocenaRe}), $0\leq Z_{12}\leq d_M d_N$, we get exactly (\ref{Hro1}) for $r=2$ and (\ref{HC10}). 
Much as in the case $A=\diag(a,\ldots,a)$, the condition $|a_1|,|a_2|\leq \tfrac{1}{\sqrt{15}}$ (and hence $\vartheta_1,\vartheta_2\leq \frac{1}{4}$) now implies that $H^{\mathbb{C}}(\rho)$ is positive definite everywhere, except at the origin.

In particular, if $A=\diag (a,a)$ we obtain even better upper bound for $|a|$. Lemma \ref{lemaformula} (\ref{pr1}) applied to any polynomial $P$ (and $\rho=P(d_M,d_N)$), together with (\ref{YRe}), (\ref{notMNr}), (\ref{vsotaZjk}) for $r=2$, and setting $Y=\sum_{j=1}^2\tfrac{(x_j-ay_j)y_j}{\sqrt{1+a^2}}$, $\theta=\frac{a}{\sqrt{1+a^2}}$, then yields
\begin{align}
\det (H_2^{\mathbb{C}}(\rho))=&\Delta(\tfrac{\partial^2 P}{\partial u^2}{\scriptstyle (d_M,d_N)} d_M+\tfrac{\partial^2 P}{\partial v^2}{\scriptstyle (d_M,d_N)} d_N-2\tfrac{\partial^2 P}{\partial u \partial v}{\scriptstyle (d_M,d_N)} \vartheta Y)\\
&+\Delta^2+\det (H^{\mathbb{R}} (P)_{(d_M,d_N)})(d_M d_N-Y^2).\nonumber
\end{align}
For $P(u,v)=u^3v+5u^2v^2+uv^3$, we get $\Delta=\tfrac{1}{2}(d_M^3+13d_M^2d_N+13d_Md_N^2+d_N^3)$, 
\[
\det (H^{\mathbb{R}} (P)_{(d_M,d_N)})=-3(3d_M^4+20d_M^3d_N+94d_M^2d_N^2+20d_Md_N^3+3d_M^4),
\]
and we further have 
\begin{equation}\label{qrs}
\det (H_2^{\mathbb{C}}(\rho))=R(d_M,d_N)Y^2+\vartheta S(d_M,d_N)Y+T(d_M,d_N),
\end{equation}
where
\begin{align*}
R(u,v)=&3(3u^4+20u^3v+94u^2v^2+20uv^3+3v^2),\\
S(u,v)=&-(3u^5+59u^4v+302u^3v^2+302u^2v^3+59uv^4+3v^5),\\
T(u,v)=&\tfrac{1}{4}(u^6+22u^5v+403u^4v^2+44u^3v^3+403u^2v^4+22uv^5+v^6).
\end{align*}
The discriminant of the expression (\ref{qrs}) with respect to $Y$ is for $\vartheta={\frac{2}{3}}$ equal to
\begin{align*}
-\tfrac{1}{9}(& 45u^{10}+906u^9v+25889u^8v^2+127768u^7v^3+582402u^6v^4-207076u^5v^5\\
               &+582402u^4v^6+127768u^3v^7+25889u^2v^8+906uv^9+45v^{10}),
\end{align*}
which is negative everywhere, except for $u=v=0$. If  
$|a|\leq \frac{2}{\sqrt{5}}$ (and hence $\vartheta\leq\frac{2}{3}$), we have $\det (H_2^{\mathbb{C}}(\rho))\geq 0$ with equality precisely at the origin. Next, by combining Lemma \ref{lemaformula} (\ref{pr0}) and (\ref{ocenaRe}) for $r=1$, we deduce that
\begin{align*}
 H_1^{\mathbb{C}}(\rho)\geq & \tfrac{1}{2}(d_M^3+13d_M^2d_N+13d_Md_N^2+d_N^3)+ 2d_N(3d_M+5d_N)d_{M,1} \nonumber\\
&+2d_M(3d_N+5d_M)d_{N,1}-\vartheta (6d_M^2+40d_Md_N+6d_N^2)(d_{M,1}d_{N,1})^{\frac{1}{2}}.
\end{align*}
By regrouping the terms for $\vartheta \leq{\frac{2}{3}}$ ($|a|\leq \frac{2}{\sqrt{5}}$), we further get
\begin{align*}
\nonumber H_1^{\mathbb{C}}(\rho)&\geq \tfrac{4}{3}(d_M \sqrt{d_{N,1}}-d_N\sqrt{d_{M,1}})^2+ d_M^2\big(\tfrac{1}{2}d_M-4 (d_{M,1}d_{N,1})^{\frac{1}{2}}+8d_{N,1}\big)\\
&+ d_N^2\big(\tfrac{1}{2}d_N-4 (d_{N,1}d_{M,1})^{\frac{1}{2}}+8d_{M,1}\big)+6d_Nd_M\big(d_{M}-2(d_{M,1}d_{N,1})^{\frac{1}{2}}+d_{N}\big)\\
& \nonumber+6d_Nd_M(\sqrt{d_{M,1}}-\sqrt{d_{N,1}})^2+\tfrac{2}{3}(d_{M,1}d_N^2+d_{N,1}d_M^2)+\tfrac{1}{2}d_Md_N(d_M+d_N).\nonumber
\end{align*}
Since $d_M\geq d_{M,1}$ and $d_N\geq d_{N,1}$, all the terms on the right hand-side of the equality are non-negative, and in addition the last term vanishes precisely at the origin. This proves that $\rho=d_M^3d_N+5d_M^2d_N^2+d_Md_N^3$ for $M=M\big(\diag (a,a)\big)$ with $|a|\leq \frac{2}{\sqrt{5}}$ is strictly plurisubharmonic precisely on $\mathbb{C}^n\setminus\{0\}$.

Finally, let $n=2$ and assume that $A$ has complex eigenvalues. By (\ref{dM0}) we have $d_M(x_1,y_1,x_2,y_2)=\frac{(x_1-cy_1+by_2)^2}{1+b^2+c^2}+\frac{(x_2-cy_2-by_1)^2}{1+b^2+c^2}$ and the holomorphic derivatives of $d_M$ are of the form (\ref{dMCodvod}). By regrouping the terms we easily see that
\begin{align*}\label{cauch2}
&\,\,\Rea (\tfrac{\partial d_M}{\partial z_1}\tfrac{\partial d_N}{\partial \overline{z}_1})=-\tfrac{1}{1+b^2+c^2}y_1\big(c(x_1-cy_1+by_2)+b(x_2-cy_2-by_1)\big),\\
&
\begin{array}{ll}
\Rea \big(\sum_{j=1,2}\tfrac{\partial d_M}{\partial z_j}\tfrac{\partial d_N}{\partial \overline{z}_j}\big)=&\tfrac{b}{1+b^2+c^2}\big(y_2(x_1-cy_1+by_2)-y_1(x_2-cy_2-by_1)\big) \nonumber\\  
&-\tfrac{c}{1+b^2+c^2}\big(y_1(x_1-cy_1+by_2)+y_2(x_2-cy_2-by_1)\big),
\end{array}\\
&\,\,\Ima (\tfrac{\partial d_N}{\partial z_2}\tfrac{\partial d_M}{\partial \overline{z}_1}- \tfrac{\partial d_N}{\partial z_1}\tfrac{\partial d_M}{\partial \overline{z}_2}  )=\tfrac{1}{1+b^2+c^2}\big(y_1(x_2-cy_2-by_1)-y_2(x_1-cy_1+by_2\big)\nonumber.
\end{align*}
Applying the Cauchy-Schwarz inequality respectively to the first or third expression and to each term of the sum in the second expression, we get
\begin{equation}\label{ocenaCRea2}
\big|\Rea (\tfrac{\partial d_M}{\partial z_1}\tfrac{\partial d_N}{\partial \overline{z}_1})\big|\leq \tfrac{\sqrt{b^2+c^2}}{\sqrt{1+b^2+c^2}}(d_M d_N)^{\frac{1}{2}}
\end{equation}
and
\begin{align}\label{ocenaCRea}
&\big|\Rea (\sum_{j=1,2}\tfrac{\partial d_M}{\partial z_j}\tfrac{\partial d_N}{\partial \overline{z}_j})\big|\leq \tfrac{|b|+|c|}{\sqrt{1+b^2+c^2}}(d_M d_N)^{\frac{1}{2}},\\
&|\Ima (\tfrac{\partial d_N}{\partial z_2}\tfrac{\partial d_M}{\partial \overline{z}_1}- \tfrac{\partial d_N}{\partial z_1}\tfrac{\partial d_M}{\partial \overline{z}_2}  )|\leq\tfrac{1}{1+b^2+c^2}(d_M d_N)^{\frac{1}{2}}.\nonumber
\end{align}
We now apply Lemma \ref{lemaformula} (\ref{pr0}), once more to $\rho=d_M^2d_N+d_N^2d_M$. Using respectively  (\ref{ocenaCRea2}) and the rough estimates $|\tfrac{\partial d_M}{\partial z_1}|^2,|\tfrac{\partial d_M}{\partial z_2}|^2\geq 0$, $\frac{\sqrt{b^2+c^2}}{\sqrt{1+b^2+c^2}}\leq \frac{1}{4}$ for $|b|,|c|\leq \frac{1}{16}$, then after regrouping the terms we obtain ($\Delta=\frac{1}{2}(d_M^2+4d_Md_N+d_N^2)$): 
\begin{align*}
 H_1^{\mathbb{C}}(\rho)\geq & \Delta+\big(2d_{N} |\tfrac{\partial d_M}{\partial z_1}|^2+2d_M |\tfrac{\partial d_N}{\partial z_1}|^2-4\tfrac{\sqrt{b^2+c^2}}{\sqrt{1+b^2+c^2}}(d_M+d_N)(d_{M}d_{N})^{\frac{1}{2}}\big)\nonumber \\
 \geq & \tfrac{1}{2}(d_M^2+4d_M d_N+d_N^2)-(d_M+d_N)(d_{M}d_{N})^{\frac{1}{2}}\nonumber \\
 \geq & \tfrac{1}{2}(d_M+d_N)(\sqrt{d_M}-\sqrt{d_N})^2+d_Md_N.
\end{align*}
It is immediate that $H_1^{\mathbb{C}}(\rho)$ in non-negative and vanishes precisely at the origin. Furthermore, Lemma \ref{lemaformula} (\ref{pr2}) (with $0\leq Z_{12}\leq d_Md_N$) and (\ref{ocenaCRea}) yield
\begin{align*}
   \nonumber         \det (H_2^{\mathbb{C}}(\rho))\geq &\Delta^2-\Gamma^2+ (\Delta-\tfrac{2b}{1+b^2+c^2}\epsilon)(2d_N)d_M+2\Delta d_Md_N \\ &-2\big(\Delta\tfrac{|b|+|c|}{\sqrt{1+b^2+c^2}}(d_{M}d_{N})^{\frac{1}{2}}+|\Gamma| \tfrac{1}{1+b^2+c^2}(d_M d_N)^{\frac{1}{2}} \big)(2d_M+2d_N)\\
                                             &-4(d_M^2+d_Md_N+d_N^2)d_{M}d_{N}\nonumber 
                    ,
\end{align*}
where $\Gamma=\tfrac{b}{1+b^2+c^2}(2d_Md_N+d_N^2)$. Using the estimates $1+b^2+c^2\geq 1$ and $|c|,|b|\leq \frac{1}{16}$ respectively, and each time regrouping the like terms, we further get
\begin{align*}
   \nonumber   \det (H_2^{\mathbb{C}}(\rho))\geq  &\tfrac{1}{4}d_M^4+\tfrac{17}{2}d_M^2d_N^2+\tfrac{1}{4}d_M^4-b^2d_N^2(12d_M^2+8 d_M d_N+d_N^2) \nonumber \\   
    &-2|b|(d_M^2+8d_Md_N+3d_N^2)(d_M+d_N)(d_{M}d_{N})^{\frac{1}{2}}\\
     &-2|c|(d_M^2+4d_Md_N+d_N^2)(d_M+d_N)(d_{M}d_{N})^{\frac{1}{2}}\\
   \geq &\tfrac{1}{2}d_Md_N^2\big(2\sqrt{d_M}-\sqrt{d_N}\big)^2+ \tfrac{1}{128}d_M^2d_N\big(7\sqrt{d_M}-16\sqrt{d_N}\big)^2 \\
&+\tfrac{1}{8}d_M^3\big(\sqrt{d_M}-\sqrt{d_N}\big)^2+\tfrac{1}{8}d_N^3\big(2\sqrt{d_M}-\sqrt{d_N}\big)^2 +\tfrac{67}{128}d_M^3d_N\nonumber \\
&+\tfrac{1}{256}d_M^4+(\tfrac{31}{256}d_M^4-\tfrac{33}{32}d_M^3d_N+\tfrac{285}{64}d_M^2d_N^2-\tfrac{33}{32}d_Md_N^3+\tfrac{31}{256}d_N^4).\nonumber
\end{align*}
The first six terms are non-negative, while the last one is positive everywhere, except at the origin. Indeed, on $\mathbb{C}^2\setminus (M\cup N)$ it can be seen as $d_M^2d_N^2(\tfrac{31}{256}W^2-\frac{33}{32}W+\frac{539}{128})$, $W=\frac{d_M}{d_N}+\frac{d_N}{d_M}$. Hence $\rho$ is strictly plurisubharmonic on $\mathbb{C}\setminus\{0\}$.

To finish the proof of the lemma we observe that, if $P$ is either $P(u,v)=u^2v+uv^2$ or $P(u,v)=u^3v+5u^2v^2+uv^3$, the property (\ref{pogojP}) is clearly satisfied.
\end{proof}

\begin{remark}
The estimates on the entries of $A$ in the lemma are certainly not optimal and might be improved, while on the other hand it is not clear at the moment of this writing, how to obtain a significantly better estimates. The computations quickly get very lengthy if we increase the degree of the polynomial $P$. 
\end{remark}

\section{Regular Stein neighborhoods}\label{baza}

A system of open Stein neighborhoods $\{\Omega_{\epsilon}\}_{\epsilon\in (0,1)}$ of a set $S$ in a complex manifold $X$ is called a {\em regular}, if for every $\epsilon \in (0,1)$ we have 
\begin{enumerate}
\item $\Omega_{\epsilon}=\cup_{t<\epsilon}\Omega_t, \qquad \overline{\Omega}_{\epsilon}=\cap_{t>\epsilon}\Omega_{t}$,
\item $S=\cap_{\epsilon\in (0,1)}\Omega_{\epsilon}$ is a strong deformation retract of every $\Omega_{\epsilon}$ with $\epsilon\in (0,1)$.
\end{enumerate}

For instance, one way to construct such a system of neighborhoods is to find a non-negative function $\rho$, which is strictly plurisubharmonic in some neighborhood of $S$, and such that $S=\{\rho=0\}=\{\nabla\rho=0\}$. Observe that in this case the sublevel sets $\Omega_{\epsilon}=\{\rho<\epsilon \}$ for $\epsilon$ small enough are Stein, and the flow of the negative gradient vector field $-\nabla\rho$ gives us the strong deformation retraction of $\Omega_{\epsilon}$ to $S$. Note that slightly weaker conditions concerning plurisubharmonicity of $\rho$ can work as well (see e.g. Theorem \ref{izrek} or \cite[Theorem 4.1]{Tadej2}).

For the sake of completeness we also recall the following fact about homogeneous polynomials \cite[Lemma 3.2]{Tadej2}, which will be used later on.

\begin{lemma}\label{lemaPR}
Let $Q,R\in \mathbb{R}[x_1,x_2,\ldots,x_m]$ be real homogeneous polynomials in $m$ variables and of even degree $s$. Assume further that $Q$
is vanishing at the origin and is positive elsewhere. Then for any sufficiently small constant $\epsilon_0 >0$, it follows that 
$Q\geq \epsilon_0\cdot |R|$, with equality precisely at the origin.
\end{lemma}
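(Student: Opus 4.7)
The plan is to exploit the scale invariance coming from homogeneity of the same degree and then reduce the global estimate to a compactness argument on the unit sphere.

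First I would reduce to the trivial case: if $R\equiv 0$, any $\epsilon_0>0$ satisfies the conclusion (with equality only at $0$ since $Q$ vanishes only there). So assume $R\not\equiv 0$, and let $S^{m-1}\subset \mathbb{R}^m$ be the unit sphere. Since $Q$ is continuous and strictly positive off the origin, $q_{\min}:=\min_{S^{m-1}}Q>0$; since $|R|$ is continuous and not identically zero on the sphere, $r_{\max}:=\max_{S^{m-1}}|R|>0$. Choose any
\[
0<\epsilon_0<\frac{q_{\min}}{r_{\max}}.
\]

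Next I would promote the spherical estimate to all of $\mathbb{R}^m$ using homogeneity of degree $s$. For $x\neq 0$ write $x=|x|\hat{x}$ with $\hat{x}\in S^{m-1}$. Then
\[
Q(x)=|x|^s Q(\hat{x})\geq |x|^s q_{\min},\qquad |R(x)|=|x|^s |R(\hat{x})|\leq |x|^s r_{\max},
\]
so that
\[
Q(x)-\epsilon_0 |R(x)|\geq |x|^s\bigl(q_{\min}-\epsilon_0 r_{\max}\bigr)>0.
\]
At $x=0$ both $Q$ and $R$ vanish (since $s\geq 1$ and they are homogeneous), so $Q(0)=\epsilon_0|R(0)|=0$. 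Hence equality holds precisely at the origin. The hypothesis that $s$ is even is not strictly needed for this argument, but it guarantees $Q\geq 0$ globally, which is required for the statement to make sense.

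There is no substantial obstacle; the content of the lemma is exactly the observation that a homogeneous function of degree $s$ is determined by its restriction to the unit sphere, where standard compactness applies. The only mild point to watch is handling the case $R\equiv 0$ separately so that $r_{\max}>0$ and the choice of $\epsilon_0$ is legitimate.
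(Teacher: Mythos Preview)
Your argument is correct and is exactly the standard compactness-on-the-unit-sphere proof one expects: positivity of $Q$ away from $0$ gives $q_{\min}>0$ on $S^{m-1}$, boundedness of $|R|$ gives $r_{\max}<\infty$, and homogeneity of the same degree $s$ promotes the spherical inequality $Q>\epsilon_0|R|$ to all of $\mathbb{R}^m\setminus\{0\}$. The case split $R\equiv 0$ is handled properly, and your remark that evenness of $s$ is only used to make $Q\geq 0$ globally consistent is accurate.

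As for comparison: the paper does not supply a proof of this lemma at all; it merely recalls the statement from \cite[Lemma~3.2]{Tadej2}. Your write-up therefore fills in what the paper omits, and does so by the natural and presumably intended route.
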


We are now ready to prove the main result.

\begin{theorem}\label{izrek}
Let $A$ be a real $n\times n$ matrix such that $A-iI$ is invertible. Further, let $M(A)=(A+iI)\mathbb{R}^2$. Then the union $M(A)\cup \mathbb{R}^n$ has a regular system of strongly pseudoconvex Stein neighborhoods if any of the following properties are satisfied:
\begin{enumerate}
\item \label{prva} The real parts of the eigenvalues of $A$ are sufficiently close to a real constant $a$ with $|a|\leq \tfrac{1}{\sqrt{15}}$ ($|a|\leq \frac{2}{\sqrt{5}}$ if $n=2$), while the imaginary parts are sufficiently close to zero.
\item \label{druga} $n=2$ and modulii of the real parts of the eigenvalues of $A$ are $\leq \tfrac{1}{\sqrt{15}}$, while the imaginary parts are sufficiently close to zero. 
\item \label{tretja} $n=2$ and the modulii of the real and imaginary parts of the eigenvalues of $A$ are $\leq\tfrac{1}{16}$.
\end{enumerate} 
Moreover, away from the origin the neighborhoods coincide with sublevel sets of the squared Euclidean distance functions to $M$ and $N$ respectively. 
\end{theorem}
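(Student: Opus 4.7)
The plan is to reduce each of the three cases to the normal forms addressed by Lemma \ref{lemaocena} and then extend by a perturbation argument. By the discussion preceding Lemma \ref{lemadelta}, a nonsingular complex-linear coordinate change carrying $N$ to $\mathbb{R}^n$ puts $A$ into real Jordan canonical form with the off-diagonal parameter $\delta$ freely at our disposal. In each case, by shrinking $\delta$ and using the closeness of the eigenvalues to the prescribed targets, the resulting matrix is entrywise close to a matrix $A_0$ handled by Lemma \ref{lemaocena}: respectively $A_0 = \diag(a, \ldots, a)$, $A_0 = \diag(a_1, a_2)$, or the $2 \times 2$ block with eigenvalues $c \pm ib$.

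Combining the explicit formulas of Lemma \ref{lemadelta} with the smooth dependence of the Gram--Schmidt process on the entries of $A$, one obtains $d_{M(A)} = d_{M(A_0)} + q$, where $q$ is a real quadratic form in $(x, y)$ whose coefficients tend to zero as $A \to A_0$. Let $P$ be the homogeneous polynomial furnished by Lemma \ref{lemaocena}, so that $\rho_0 := P(d_{M(A_0)}, d_N)$ is strictly plurisubharmonic on $\mathbb{C}^n \setminus \{0\}$. Setting $\rho := P(d_{M(A)}, d_N)$, each leading principal minor $\det H^{\mathbb{C}}_r(\rho)$ is a homogeneous polynomial in $(x, y)$ of even degree whose leading part agrees with $\det H^{\mathbb{C}}_r(\rho_0)$. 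Since the latter is strictly positive on $\mathbb{C}^n \setminus \{0\}$ by the quantitative estimates in the proof of Lemma \ref{lemaocena}, and since the perturbation coming from $q$ is a homogeneous polynomial of the same degree with arbitrarily small coefficients, Lemma \ref{lemaPR} yields $\det H^{\mathbb{C}}_r(\rho) > 0$ on $\mathbb{C}^n \setminus \{0\}$ for all $r$. Hence $\rho$ is strictly plurisubharmonic on $\mathbb{C}^n \setminus \{0\}$.

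The function $\rho$ satisfies $\rho \geq 0$, $\{\rho = 0\} = M(A) \cup N$, and, by the condition \eqref{pogojP} on $P$, $\nabla \rho$ vanishes only at the origin. The sublevel sets $\Omega_\epsilon^0 := \{\rho < \epsilon\}$ are therefore open neighborhoods of $M(A) \cup N$ with smooth strongly pseudoconvex boundary (since $\nabla \rho \neq 0$ on $\{\rho = \epsilon\}$ and $\rho$ is strictly plurisubharmonic there), and they are Stein. To obtain the tubular shape claimed in the moreover clause, I would patch $\Omega_\epsilon^0$ near the origin with the standard tubular sets $\{d_M < c\epsilon\} \cup \{d_N < c\epsilon\}$ away from the origin (both $d_M$ and $d_N$ are strictly plurisubharmonic in tubular neighborhoods of the totally real $M$ and $N$), using a radial cutoff together with a suitably convex modification so that strict plurisubharmonicity survives the patch. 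The required strong deformation retraction of $\Omega_\epsilon$ onto $M(A) \cup N$ is supplied by the negative-gradient flow of the patched function, which is well-defined off $M(A) \cup N$ and extends continuously at the origin by homogeneity of $\rho$.

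The main obstacle is the quantitative perturbation estimate: verifying that the positive lower bounds for $\det H^{\mathbb{C}}_r(\rho_0)$ established inside the proof of Lemma \ref{lemaocena} genuinely dominate the error introduced by $q$ after substitution into $P$. A secondary difficulty is the patching in the last step, where the tubular shape of the neighborhoods away from the origin must be realized without destroying plurisubharmonicity or strong pseudoconvexity of the boundary.
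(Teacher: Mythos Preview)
Your approach is essentially the paper's: reduce to Jordan form with $\delta$ free, perturb from the diagonalizable model using Lemma~\ref{lemadelta} and the homogeneity argument of Lemma~\ref{lemaPR}, then patch with $d_M$ and $d_N$ away from the origin via a radial cutoff exactly as in \cite[Theorem 4.1]{Tadej2}. One slip to fix: by \eqref{pogojP} the gradient $\nabla\rho$ vanishes on all of $M(A)\cup N$, not only at the origin (since $\nabla d_M|_M=0$ and $\tfrac{\partial P}{\partial v}(0,v)=0$, etc.), and this is precisely what makes the negative-gradient flow retract onto $M(A)\cup N$; your later remark that the flow is well-defined off $M(A)\cup N$ shows you already have the correct picture.
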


It is clear that non-singular linear transformatios map a regular system of Stein neighborhoods into a regular system of Stein neighborhoods. According to the note in Section \ref{DTR}, the general case of the union of two totally real subspaces $M,N$ of maximal dimension, intersecting at the origin, thus reduces to the situation described in the Theorem \ref{izrek}, i.e. $N=\mathbb{R}^2$ and $M=(A+iI)\mathbb{R}^n$, where $i$ is not the eigenvalue of the real $n\times n$ matrix $A$.

\begin{proof}[Proof of the Theorem \ref{izrek}]
Our goal is to construct the function $\rho$, which is strictly plurisubharmonic everywhere, except maybe at the origin, and such that it satisfies the condition $M(A)\cup \mathbb{R}^n=\{\rho=0\}=\{\nabla \rho=0\}$. Clearly, since any real non-singular matrix $R$ maps $M(A)=(A+iI)\mathbb{R}^n$ onto $M(VAV^{-1})=(VAV^{-1}+iI)\mathbb{R}^n$, we deduce that $\sigma=\rho\circ V^{-1}$ with respect to $M(VAV^{-1})$ inherits all the above properties of $\rho$. It is therefore sufficient to consider the case when $A$ is in the Jordan canonical form (\ref{Adelta}), where the parameter $\delta$ can be chosen arbitrarily.

If $A$ satisfies property (\ref{tretja}) then Lemma \ref{lemaocena}(\ref{roC}) immediately implies the existence of the function $\rho$ with the properties listed obove.

Observe that for any real $n \times n$ matrix $B$ and any homogeneous polynomial $P$ of degree $k\geq 2$ in two variables, it follows that $\det ( H_r^{\mathbb{C}}(P{\scriptstyle (d_{M(B)},d_N)}))$, $r\in \{1,\ldots,n\}$, is a homogeneous polynomial of degree $(2k-2)r$ in $x,y$.

Next, let $A=A_{\delta}$ be of the form (\ref{Adelta}), where $\delta$ is to be chosen later.
By Lemma \ref{lemadelta} we have $d_{M(A)}=d_{M(A_0)}+q_{\delta}$, where $q_{\delta}$ is a homogeneous polynomial of degree $2$ in variables $x_1,\ldots,x_n,y_1,\ldots,y_n$ and such that its coefficients are rational functions in $\delta$ and they have no pole at $\delta=0$. For a homogeneous polynomial $P$ (to be chosen) of degree $k\geq 2$ in two variables we obtain
\begin{equation}\label{redA0}
       \det \big( H_r^{\mathbb{C}}(P{\scriptstyle (d_{M(A_{\delta})},d_N)})\big)=\det \big(H_r^{\mathbb{C}}(P{\scriptstyle (d_{M(A_0)},d_N)})\big)+Q_{\delta}, \quad r\in\{1,\ldots,n\},
\end{equation}
where $Q_{\delta}$ is a homogeneous polynomial of degree $(2k-2)r$ in $x,y$, and in addition its coefficients are rational functions in $\delta$ and without a pole at $\delta=0$. 

Let further 
\[
A_0=\diag (D_{1},\ldots,D_{\beta},d_1,\ldots ,d_{\gamma}),
\]
\[
D_j=C_j+\epsilon_jI_2, \,\,j\in\{1,\ldots,\beta\}, \qquad d_k=a_k+\epsilon_{k+\beta},\,\,k\in\{1,\ldots,\gamma\},
\]
where $I_2$ is the $2\times 2$ identity-matrix,  
                                                $C_j=\left[\begin{array}{c c}   
	                                            c_j & -b_j\\
	                                            b_j & c_j
                                       \end{array}\right]$ is a real matrix and $a_1,\ldots a_{\gamma}$, $ \epsilon_1,\ldots,\epsilon_{\beta+\gamma}\in \mathbb{R}$. Setting $B_0=\diag (C_{1},C_{2}\ldots,C_{\beta},a_1,\ldots a_{\gamma})$, we observe that $d_{M(A_0)}=d_{M(B_0)}+s_{\epsilon}$ and
\begin{equation}\label{redEps}
       \det \big( H_r^{\mathbb{C}}(P{\scriptstyle (d_{M(A_{0})},d_N)})\big)=\det \big(H_r^{\mathbb{C}}(P{\scriptstyle (d_{M(B_0)},d_N)})\big)+S_{\epsilon},  \quad r\in\{1,\ldots,n\},
\end{equation}
where $s_{\epsilon}$ and $S_{\epsilon}$ respectively are homogeneous polynomials of degrees $2$ and $2kr$ in $x,y$, and such that their coefficients are polynomials in variables $\epsilon_1,\ldots,\epsilon_{\beta+\gamma}$ without constant term. For any $j\in \{1,\ldots,\beta\}$ we have 
\begin{align*}
&\frac{(x_{2j-1}-c_jy_{2j-1}+b_jy_{2j})^2+(x_{2j}-c_jy_{2j}-b_jy_{2j-1})^2}{1+c_j^2+b_j^2}=\\
=&\frac{(x_{2j-1}-c_jy_{2j-1})^2+(x_{2j}-c_jy_{2j})^2}{1+c_j^2+b_j^2}+b_j^2\frac{(x_{2j-1}-c_jy_{2j-1})^2+(x_{2j}-c_jy_{2j})^2}{(1+c_j^2+b_j^2)(1+c_j^2)}\\
&+b_j\frac{(2(x_{2j-1}-c_jy_{2j-1})+b_jy_{2j})y_{2j}-(2(x_{2j}-c_jy_{2j})-b_jy_{2j-1})y_{2j-1}}{(1+c_j^2+b_j^2)}.
\end{align*}
Using (\ref{dM0}) it thus follows
\[
d_{M(A_0)}(x,y)=d_{M(\Lambda)}(x,y)+\sum_{j=1}^{\beta}t_j(x,y),
\]
where $\Lambda=\diag (c_1,c_1,\ldots,c_{\beta},c_{\beta},a_1,\ldots,a_{\gamma})$, and for every $j$ the polynomial $t_j$ is homogeneous polynomial of degree $2$ in variables $x_{2j-1},x_{2j},y_{2j-1},y_{2j}$, and such that its coefficients are rational functions in $b_j$ and they have a zero at $b_j=0$.
It further implies that
\begin{equation}\label{redD}
        \det \big( H_r^{\mathbb{C}}(P{\scriptstyle (d_{M(A_0)},d_N)})\big)=\det \big(H_r^{\mathbb{C}}(P{\scriptstyle (d_{M(\Lambda)},d_N)})\big)+T_{b}, \quad r\in\{1,\ldots,n\},
\end{equation}
where $T_{b}$ is a homogeneous polynomial of degree $(2k-2)r$ in $x,y$. In addition, the coefficients of $s_b$ are rational functions in $b_1,\ldots,b_{\beta}$, and such that they vanish for $b_1=\ldots=b_{\beta}=0$.

Observe that $A_{\delta}$ satisfies the condition (\ref{prva}) (respectively (\ref{druga})) of Lemma \ref{izrek} precisely when $\Lambda$ satisfies the condition (\ref{roaa}) (respectively (\ref{ro2})) of Lemma \ref{lemaocena}, provided that constants $b_1,\ldots,b_{\beta}$, $\epsilon_1,\ldots,\epsilon_{\beta+\gamma}$, $\delta$ are small enough. Furthermore, if $\Lambda$ satisfies any of the conditions (\ref{roaa}) or (\ref{ro2}) in Lemma \ref{lemaocena}, then there exists a homogeneous polynomial $P$, and such that $\rho=P(d_{M(\Lambda)},d_N)$ is a polynomial in $x,y$, which is strictly plurisubharmonic everywhere except at the origin, and such that (\ref{pogojP}) is satisfied. We now use Lemma \ref{lemaPR} to see that for sufficiently small constants $b_1,\ldots,b_{\beta}$, $\epsilon_1,\epsilon_{\beta+\gamma}$, $\delta$, and using (\ref{redD}), (\ref{redEps}), (\ref{redA0}), respectively, $\det ( H_r^{\mathbb{C}}(P{\scriptstyle (d_{M(A_{\delta})},d_N)}))$ vanishes at the origin and is positive everywhere else. Since $\nabla \rho=\frac{\partial P}{\partial u}{\scriptstyle (d_{M(A_{\delta})},d_N)}\nabla d_{M(A_{\delta})}+\frac{\partial P}{\partial v}{\scriptstyle (d_{M(A_{\delta})},d_N)}\nabla d_N$, it follows from (\ref{pogojP}) that $M\cup N=\{\rho=0\}=\{\nabla\rho=0\}$.

Finally, mutatis mutandis, the proof given in \cite[Theorem 4.1]{Tadej2} now applies to glue $\rho$ away from the origin with the squared distance functions. We 
choose open balls $B_R$ and $B_{2R}$ respectively, centered at $0$ and with radii $R$ and $2R$, and observe that for any sufficiently small $\epsilon >0$ the sets 
\[
     T_{\epsilon,M}=\{z\in\mathbb{C}^n\setminus \overline{B}_{R}\colon d_M(z)<\epsilon\}, \qquad  
     T_{\epsilon,N}=\{z\in\mathbb{C}^n\setminus\overline{B}_{R}\colon d_N(z)<\epsilon\}
\]
are disjoint. Next, we set $T_{\epsilon}=T_{\epsilon,M}\cup T_{\epsilon,N}$ and define:
\[
      \rho_0(z)=\theta(z)\rho(z)+\bigl(1-\theta(z)\bigr)d_M|_{T_{\epsilon,M}}(z)+\bigl(1-\theta(z)\bigr)d_N|_{T_{\epsilon,N}}(z),\quad z\in B_{2R}\cup T_{\epsilon}.
\]
Here $\theta(z)=\chi\big(\sum_{j=1}^n|z_j|^2\big)$, where $\chi$ is a suitable cut-off function with $\chi(t)=1$ for $t\leq R$ 
and $\chi(t)=0$ for $t\geq 2R$. 

It is clear that $\{\rho_0=0\}=M\cup N$. On 
$(B_{2r}\setminus \overline{B}_R)\setminus (M\cup N)$, but close to $M\cup N$, we have $\nabla\theta$ 
near to tangent directions to $M\cup N$, and $\nabla d_M$ or $\nabla d_N$ respectively are near to 
normal directions to $M$ and $N$. Hence, after possibly choosing $\epsilon$ smaller we obtain $\{\nabla\rho_0=0\}=M\cup N$. The flow of the negative gradient vector field $-\nabla \rho_0$ then yields a deformation retraction of $\Omega_{\epsilon}=\{\rho_0<\epsilon\}$ onto 
$M\cup N$.

Since $\rho$, $d_M$, $d_N$ along with their gradients all vanish 
on $M\cup N$, it follows that for $z\in M\cup N$ and any $\xi\in T_z(\mathbb{C}^n)$ we have 
\[
      \mathcal{L}_{(z)}(\rho_0;\xi)=\theta (z)\mathcal{L}_{(z)}(\rho;\xi)+(1-\theta(z))\mathcal{L}_{(z)}(d_M|_{T_{\epsilon,M}};\xi)
                                                                                 +(1-\theta(z))\mathcal{L}_{(z)}(d_N|_{T_{\epsilon,N}};\xi).
\]
The Levi form of $\rho_0$ is thus positive on $\overline{\Omega}_{\epsilon}\setminus \{0\}$, provided that 
$\epsilon$ is chosen small enough. Further, since the restrictions of plurisubharmonic functions to analytic sets are plurisubharmonic 
and satisfy the maximum principle (see \cite{GRpluri}), there cannot be any compact 
analytic subset of positive dimension in $\mathbb{C}^n$. By a result of Grauert (see \cite[Proposition 5]{lit9}) $\Omega_{\epsilon}$ is then Stein. 
This completes the proof.
\end{proof}


Lemma \ref{lemaocena} can be also applied to prove the existence of regular neighborhoods of certain smooth totally real immersions of a real $n$-manifolds into a complex $n$-manifold; for results on clo\-sed real surfaces immersed into complex surface see \cite[Theorem 2.2]{lit1n}, \cite[Theorem 2]{lit2n}, \cite[Proposition 4.3]{Tadej2}).

\begin{trditev}\label{trdi}
Let $\pi\colon Z\to X$ be an smooth totally real immersion of a closed real $n$-manifold into a complex $n$-manifold $X$, and such that 
$\pi$ has only transverse double points (no multiple points) $q_1,\ldots q_s\in \pi (Z)$ with $\pi^{-1}(q_j)=\{t_j,u_j\}$. For any $j\in \{1,\ldots,s\}$, let the images under tangent map of the tangent spaces of $Z$ at points $t_j$ and $u_j$, respectively, define a union of two totally real subspaces in $T_{q_j}X\approx \mathbb{C}^n$, which is holomorphicaly-equivalent to $(A_j+iI)\mathbb{R}^n\cup \mathbb{R}^n\subset\mathbb{C}^n$, where $A_j$ is a real $n\times n$ matrix with $A_j-iI$ invertible. 
If the entries of $A_j$ for all $j\in\{1,\ldots,s\}$ satisfy any of the conditions (\ref{prva}), (\ref{druga}) or (\ref{tretja}) in Lemma \ref{lemaocena}, then $\tilde{Z}=\pi (Z)$ has a regular Stein neighborhood basis.
\end{trditev}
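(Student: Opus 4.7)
The plan is to reduce the global construction to local ones near each double point and near the smooth part of $\tilde{Z}$, and then patch them together with cut-off functions, following the strategy used at the end of the proof of Theorem \ref{izrek} (and mirroring \cite[Proposition 4.3]{Tadej2}).

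First, around each double point $q_j$ I would choose a holomorphic chart $(U_j,\varphi_j)$ on $X$ centered at $q_j$ in which, by hypothesis, the two local branches of $\pi(Z)$ coincide with open pieces of $M(A_j)=(A_j+iI)\mathbb{R}^n$ and $N=\mathbb{R}^n$. By the hypothesis on the entries of $A_j$ together with Theorem \ref{izrek}, there exists a non-negative function $\tilde\rho_j$ defined on a neighborhood of the origin in $\mathbb{C}^n$, strictly plurisubharmonic off the origin, with $M(A_j)\cup N=\{\tilde\rho_j=0\}=\{\nabla\tilde\rho_j=0\}$, and which away from the origin agrees with the squared Euclidean distance functions to $M(A_j)$ and $N$ respectively. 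Pulling back by $\varphi_j$ produces a local defining function $\rho_j$ for $\tilde Z\cap U_j$ with the analogous properties.

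Next, on a neighborhood of the smooth part of $\tilde Z$ (i.e.\ away from $\{q_1,\ldots,q_s\}$) the set $\tilde Z$ is a closed, totally real $\mathcal{C}^\infty$-submanifold of the complex $n$-manifold $X$. It is a standard fact (essentially Harvey--Wells) that on a tubular neighborhood of such a submanifold the squared distance function with respect to any Hermitian metric is non-negative, strictly plurisubharmonic outside $\tilde Z$, and vanishes with vanishing gradient exactly on $\tilde Z$. Call such a function $\rho_0$, defined on an open $V\subset X$ that covers $\tilde Z\setminus\{q_1,\ldots,q_s\}$. For $\epsilon$ small enough, choose pairwise disjoint coordinate balls $B_j\Subset B_j'\Subset U_j$ around each $q_j$ so that $V$ together with the $B_j$ covers a neighborhood of $\tilde Z$, and so that on the overlap $B_j'\setminus B_j$ the function $\rho_j$ already coincides with the squared distance pieces used in $\rho_0$ (this uses the ``agrees with the distance functions off the origin'' clause of Theorem \ref{izrek}).

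Then define globally
\[
\rho=\sum_{j=1}^{s}\theta_j\,\rho_j+\Bigl(1-\sum_{j=1}^{s}\theta_j\Bigr)\rho_0,
\]
where each $\theta_j$ is a smooth cut-off supported in $B_j'$ and equal to $1$ on $B_j$, extended by $0$. Since $\rho_j$ and $\rho_0$ coincide where the cut-offs vary, the sum is in fact locally equal to either $\rho_j$ or $\rho_0$ and not a genuine convex combination; this automatically preserves strict plurisubharmonicity off the origins and the identities $\tilde Z=\{\rho=0\}=\{\nabla\rho=0\}$. As in Theorem \ref{izrek}, the sublevel sets $\Omega_\epsilon=\{\rho<\epsilon\}$ for small $\epsilon>0$ are then open neighborhoods of $\tilde Z$, the negative gradient flow of $\rho$ gives a strong deformation retraction of $\Omega_\epsilon$ onto $\tilde Z$, and Grauert's criterion (via the maximum principle for plurisubharmonic functions on analytic subsets, cf.\ \cite{lit9,GRpluri}) shows that each $\Omega_\epsilon$ is Stein.

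The main obstacle is the patching step: one must arrange that the local model function $\rho_j$ and the distance-type function $\rho_0$ actually agree on an annular region around $q_j$, so that the global $\rho$ is still strictly plurisubharmonic without introducing spurious critical points. This is exactly what the final paragraph of the proof of Theorem \ref{izrek} was designed to handle, and the same argument transplants to the manifold $X$ in normal coordinates. Once this gluing is in place, the rest of the argument (retraction and Steinness) is automatic from the general properties of $\rho$ recorded above.
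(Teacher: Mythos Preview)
Your proposal has a genuine gap at the very first step. You write that in a holomorphic chart around $q_j$ ``the two local branches of $\pi(Z)$ coincide with open pieces of $M(A_j)$ and $N=\mathbb{R}^n$''. The hypothesis does not give you this: it only says that the \emph{union of tangent spaces} at $q_j$ is complex-linearly equivalent to $M(A_j)\cup\mathbb{R}^n$. In general one cannot biholomorphically straighten two transverse totally real branches simultaneously to their tangent planes; the best you get is a chart in which the branches $S_j,T_j$ are \emph{tangent} to $M(A_j)$ and $\mathbb{R}^n$ at the origin but otherwise curved. This is precisely why the paper's proof inserts the paragraph comparing the squared distance $d_{S}$ to a curved submanifold with the squared distance $d_M$ to its tangent space: they agree only up to terms of order three, so $\det H_r^{\mathbb{C}}\big(P(d_{S_j},d_{T_j})\big)$ differs from $\det H_r^{\mathbb{C}}\big(P(d_{M_j},d_{N_j})\big)$ by a remainder that is negligible near the origin relative to the homogeneous leading part. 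Lemma~\ref{lemaocena} controls the leading part, and Lemma~\ref{lemaPR} absorbs the remainder. Your argument bypasses this entirely by an illegitimate linearization.

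A second, related issue is your patching step. You assert that on an annulus around $q_j$ the local model function $\rho_j$ literally coincides with the ambient squared-distance function $\rho_0$, invoking the last clause of Theorem~\ref{izrek}. But that clause refers to the Euclidean distance in $\mathbb{C}^n$ to the \emph{linear} subspaces $M(A_j),\mathbb{R}^n$, whereas $\rho_0$ is the Riemannian distance in $X$ to the \emph{curved} image $\tilde Z$; these do not agree. The paper instead uses a genuine partition of unity pulled back along the nearest-point projection $p\colon U\to\tilde Z$, so that the cut-offs are constant on the fibers of $p$; since all the local functions and their gradients vanish on $\tilde Z$, the Levi form of the glued function on $\tilde Z$ is a convex combination of positive forms, and one then shrinks the neighborhood. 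Your ``they already coincide, so no convex combination is needed'' shortcut is not available.
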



\begin{proof}
For any double point $q_j$ there exists local holomorphic coordinates $\psi_j\colon U_j\to V_j\subset\mathbb{C}^n$, such that $\psi_j(q_j)=0$ and such that $\psi_j(\tilde{Z})=S_j\cup T_j$, where $S_j$ and $T_j$ are real $n$-manifolds, intersecting only at the origin, and are tangent to $M_j=(A_j+iI)\mathbb{R}^n$ and $N_j=\mathbb{R}^n$ there, respectively.

Next, by following the proof of the local tubular neighborhood (see e.g. \cite[Theorem 3.1]{AmbroSon} or \cite[p. 78-92]{Krantz}) we show that in a sufficiently small  neighborhood of a point $w_0$ on a real $k$-submanifold $S\subset\mathbb{R}^m$ the Taylor expansions of the squared Euclidean distance functions, respectively, to $S$ and to the affine tangent space $M$ to $S$ at $w_0$, agree to the terms of second order. Let $0\in W'\subset\mathbb{R}^r$ and let $F\colon W'\to W$ be a parametrization or $S$ in a neighborhood $W$ of a point $w_0\subset \mathbb{R}^m$, and such that $F(0)=w_0$. By the smoothness of $S$ there exists orthonormal vector fields $(v_1,\ldots,v_{m-k})\colon W\cap S\to \mathbb{R}^m$, spanning the normal space to $S$. We set $\Theta\colon W'\times \mathbb{R}^{m-k}$, $\Theta (\mu,\nu)=F(\mu)+\sum_{j=1}^{m-k}\nu_jv_j(\mu)$. Since the rank of the Jaccobian $J(\Theta)_{(0,0)}$ is maximal, by the implicit mapping theorem $\Theta$ is a smooth diffeomorphism in a small neighborhood of the origin. Let $\Phi(w)=(\mu(w),\nu(w))$ be its smooth inverse, defined in a small neihgborhood $\tilde{W}$ of $w_0$. Observe that the nearest point on $S$ for any $w\in \tilde{W}$ is $F(\mu(w))$ and the squared Euclidean distance from $w$ to $S$ is $d_S(w)=|\nu(w)|=\sum_{j=1}^{m-k}\nu_j^2(w)$. Since $d_S(w)=0$ for $w\in S$ all derivatives in the tangent directions to $S$ vanish at $w_0$, while the derivative of $\nu_j$ in the direction $v_s(w_0)$ is equal to $1$ if $j=s$ and vanishes otherwise. If now $w=(x_1,\ldots,x_m)$ denote coordinates on $\tilde{W}\subset\mathbb{R}^m$ then by Taylor's theorem we have 
\[
\nu_j(w)=\nu_j(w_0)+
\langle v_j(w_0),w-w_0\rangle +\sum_{|\alpha|=3}(w-w_0)^{\alpha}f_{\alpha}(w), 
\]
where $(w-w_0)^{\alpha}=(x_1-x_{0,1}^{\alpha_1}\ldots (x_m-x_{0,m})^{\alpha_m}$ for a multiindex $(\alpha_1,\ldots,\alpha_m)$, and $f_{\alpha}$ is a smooth function for every multiindex $\alpha$. It follows that 
\[
d_S(w)=\sum_{j=1}^{m-k}\langle v_j(w_0),w-w_0\rangle^2+\sum_{|\alpha|=3}(w-w_0)^{\alpha}g_{\alpha}(w)=d_M(w)+\sum_{|\alpha|=3}(w-w_0)^{\alpha}g_{\alpha}(w),
\]
where $d_M$ is a squared Euclidean distance to the affine tangent space to $S$ at $w_0$ and $g_{\alpha}$ is a smooth function for every multiindex $\alpha$.

Observe further that for any homogeneous polynomial of degree $s$ we have 
\begin{align*}
\det H_r^{\mathbb{C}}(P(d_{{\scriptstyle S_j}},d_{T_j})_{(x,y)})=&\det H_r^{\mathbb{C}} (P(d_{M_j},d_{N_j})_{(x,y)})+\sum_{|\alpha|+|\beta|>(2s-2)r}x^{\alpha}y^{\beta}h_{\alpha}(x,y),\\
=&\det H_r^{\mathbb{C}} (P(d_{M_j},d_{N_j})_{(x,y)})+\sum_{|\alpha|+|\beta|=(2s-2)r}x^{\alpha}y^{\beta}H_{\alpha}(x,y),
\end{align*}
where the determinants of the complex Hessians are homogeneous polynomials of degre $(2s-2)r$ in $x,y$ and $h_{\alpha},H_{\alpha}$ are smooth for all multiindices $\alpha$, and in addition $H_{\alpha}(0)=0$.
Lemma \ref{lemaocena} now furnishes a homogeneous polynomial $P$, satisfying (\ref{pogojP}), and such that $P(d_{M_j},d_{N_j})$ is strictly plurisubharmonic ewerywhere, except at the origin. Since $H_{\alpha}(x,y)$ is sufficienty close to zero, provided that $(x,y)$ is close enoug to the origin, by using Lemma \ref{lemaPR} we deduce that $\rho_j=P(d_{M_j},d_{N_j})$ is strictly plurisubharmonic everywhere sufficiently close to the origin, but not at the origin. Moreover, property (\ref{pogojP}) yields $S_j\cup T_j=\{\rho_j=0\}=\{\nabla\rho_j=0\}$ on a sufficiently small neighborhod of the origin.
After possibly shrinking $U_j$, we set $\varphi_j=\rho_j\circ \psi_j\colon U_j\to \mathbb{R}$, and observe that $\varphi_j$ inherits the obove properties from $\rho_j$.

Further, let $\varphi_0=d_{\tilde{Z}}$ and $d_w$ respectively be the squared distance functions 
to $\tilde{Z}$ and to $w\in \tilde{Z}$ in $X$, relative to some Riemannian metric on $X$. It 
is well known that the squared distance function to a smooth totally real submanifold is strictly 
plurisubharmonic in a neighborhood of the submanifold (see \cite[Proposition 4.1]{lit27}). Therefore $\varphi_0$ is strictly plurisubharmonic in some open 
neighborhood $U_0$ of $\tilde{S}\setminus \{p_1,\ldots,p_m\}$.

By standard patching technique we now glue functions $\varphi_j$ for all $j\in \{0,1,\ldots,s\}$ (see e.g. \cite[Theorem 2]{lit2n}). First, we denote 
$U=\cup_{j=0} ^{s} U_j$ and let $p\colon U\to \tilde{Z}$ be a map defined as 
$p(z)=w$ if $d_{\tilde{Z}}(z)=d_{w}(z)$. The map $p$ is well defined and smooth, 
provided that the sets $U_j$ are chosen small enough (see e.g. \cite{Foote}). Next, we choose a partition of 
unity $\{\theta_j\}_{0\leq j\leq s}$ subordinated to $\{U_j\cap \tilde{Z}\}_{0\leq j\leq s}$, 
and such that for every $j\in\{1,\ldots,s\}$ the function $\theta_j$ equals one near the 
point $p_j$. Finally, we define
\[
               \rho(z)=\sum_{j=0} ^s \theta_j\bigl(p(z)\bigr) \varphi_j(z), \qquad z \in U.
\]
We see that $\tilde{Z}=\{\rho=0\}$ and $\nabla\rho(z)=\sum_{j=0}^m \theta_j\bigl(r(z)\bigr) \nabla\varphi_j(z)$ 
for all $z\in U$, thus we further have
\[
        \mathcal{L}_{(z)}(\rho;\xi)=\sum_{j=0} ^s \theta_j(z)\mathcal{L}_{(z)}(\varphi_j;\xi), \qquad 
                                         z\in \tilde{Z}, \quad \xi \in T_p(U).
\]
After shrinking $U$ we obtain that $\{\nabla\rho=0\}=\tilde{Z}$ and $\rho$ is 
strictly plurisubharmonic everywhere, except at the points $q_{1}, \ldots, q_s$. 
Again, the flow of $-\nabla \rho_0$ yields a deformation retraction of $\Omega_{\epsilon}=\{\rho_0<\epsilon\}$ onto 
$\tilde{Z}$, and by \cite[p. 180]{GRpluri} there is no compact 
positive dimensional analytic subset in $\Omega_{\epsilon}$  (there exists a nonconstant plurisubharmonic function $\rho$ on $\Omega_{\epsilon}$). By a result of Grauert (see \cite[Proposition 5]{lit9}) $\Omega_{\epsilon}$ is then Stein.
\end{proof}



\end{document}